\numberwithin{equation}{section}
\let\pa\partial
\newcommand{\dd}{\mathop{}\!\mathrm{d}}
\newcommand{\R}{\mathbb{R}}
\newcommand{\N}{\mathbb{N}}
\newcommand{\AH}{H_{\textup{par}}}
\newcommand{\CZ}{Calder\'on--Zygmund}
\newcommand{\FDB}{Fa\`a di Bruno formula}
\newtheorem{theorem}{Theorem}[section]
\newtheorem{lemma}[theorem]{Lemma}
\newtheorem{corollary}[theorem]{Corollary}
\theoremstyle{definition}
\newtheorem{remark}[theorem]{Remark}
\begin{document}
\title[A Mikhlin--H\"ormander multiplier theorem]{ A Mikhlin--H\"ormander multiplier theorem for the partial harmonic oscillator}

\author[X. Su]{Xiaoyan Su}
\address{Laboratory of Mathematics and Complex Systems (Ministry of Education) \\ School of Mathematical Sciences\\
 Beijing Normal University, Beijing 100875,  China}
\email{suxiaoyan0427@qq.com}

\author[Y. Wang]{Ying Wang}
\address{Graduate School of China Academy of Engineering Physics,  \ Beijing, \ China, \ 100088}
\email{wsming@bupt.cn}

\author[G. Xu]{Guixiang Xu}
\address{Laboratory of Mathematics and Complex Systems,\
Ministry of Education,\
School of Mathematical Sciences,\
Beijing Normal University,\
Beijing, 100875, People's Republic of China.
}
\email{guixiang@bnu.edu.cn}

\subjclass[2010]{42B15,  42B25.}

\keywords{Littlewood--Paley $g$-function; Mikhlin--H\"ormander multiplier;  Partial harmonic oscillator; The Mehler formula. }

\begin{abstract}
We prove a Mikhlin--H\"ormander multiplier theorem for the partial harmonic oscillator $H_{\textup{par}}=-\pa_\rho^2-\Delta_x+|x|^2$ for $(\rho, x)\in\R\times\R^d$ by using the Littlewood--Paley  $g$ and $g^\ast$ functions and the associated heat kernel estimate. The multiplier we have investigated is defined on $\mathbb R \times \mathbb N$.

\end{abstract}

 \maketitle

% \tableofcontents %%Žú±íÔËÐÐÄ¿ÂŒ£¬Èç¹û²»ÒªÄ¿ÂŒ£¬¿ÉÒÔ°ÑÕâŸä»°×¢ÊÍµô¡£

\section{Introduction}
\noindent

%\subsection{Background}

%In this paper, we extend the classical Mikhlin--H\"ormander multiplier theorem to the analogue for the Schr\"odinger
%operator with partial harmonic oscillator in $\R^{d+1}$ with the following form:

In this paper, we prove a  Mikhlin--H\"ormander multiplier theorem for the partial harmonic oscillator in $\R^{d+1}$:
\begin{equation*}
  \AH=-\partial_{\rho}^2-\partial_{x_1}^2-\dots-\partial_{x_d}^2+|x|^2.
\end{equation*}
The Schr\"odinger flows for the
 operator $\AH$ arises in various branches of physics, such
as the Bose--Einstein condensates, and the propagation of mutually incoherent wave
packets in nonlinear optics (see \cite{JP}). 
%
% \textcolor{red}{How to cite the references \cite{CLY,JLR,KT,LR}?}
%The global well-posedness and scattering theory for the nonlinear Schr\"odinger equation with the operator $\AH$, i.e. $i\pa_tu+\AH u=\pm|u|^{p-1}u$ has been studied by many authors, see \cite{AR,Car05,CGGLS22}.
% Our focus is on a multiplier theorem associated to this operator.

The classical Mikhlin--H\"ormander multiplier theorem states  that  for $1<p<\infty$,
$
    \|(m\widehat f)^\vee \|_p \leq C_{p, d} \|f\|_p
$
provided that the Fourier multiplier $m \in C^{\lfloor \frac{d}{2}\rfloor +1}(\mathbb R^{d}\setminus \{0\})$ and satisfies
$
    |\partial^\alpha m(\xi)| \leq C_{\alpha} |\xi|^{-\alpha}
$ for all multi-indices $\alpha$ with $|\alpha| \leq \lfloor \frac{d}{2}\rfloor +1$.
This result can be proved  either by the {\CZ}  singular integral operator theory in  \cite{Grafakos249,MuscaluSchlag}, or
by the Littlewood--Paley $g$-functions in  \cite{Stein}. The use of the Fourier transform stems from the fact that the Laplacian operator only has the continuous spectrum in  $\mathbb R^d$.

%  or the  Fourier multipliers, which are associated to the Laplacian operator on the Euclidean spaces which has continuous spectrum, t

%One application of the Miklin multiplier theorem is the
%following `Schauder' type estimate, which is useful in the setting of elliptic
%PDE:
%$$\big\|\frac{\pa^2}{\pa x_i\pa x_j}f\big\|_{L^p(\R^d)}
%\lesssim\|\Delta f\|_{L^p(\R^d)},\;\forall\;i,j\in\{1,\dots,d\},\;1<p<\infty.$$
%This can also be deduced by using boundedness of Riesz transform which are special multiplier operators.

For the operators with discrete spectrum, such as the spherical Laplacian operator $-\Delta_{\mathbb S^{d}}$, or the Hermite operator $-\Delta_x + |x|^2$, a sufficient condition to guarantee the $L^p$-boundedness of multipliers is the proper decay in the finite differences. More precisely, the multiplier operator for Hermite expansions is defined by
\begin{align*}
    T_m f(x) =\sum_{\mu\in\N^d} m(2|\mu|+d)( f(x), \Phi_{\mu}) \Phi_{\mu}(x),
\end{align*}
where $\Phi_\mu$ is a Hermite function, see Section \ref{subseq:hermite-fns} below.
%whenever $f$ has the Hermite expansion
%\begin{align*}
%     f(x) =\sum_{\mu}\langle f, \Phi_{\mu} \rangle \Phi_{\mu}(x).
%\end{align*}
By Theorem 1 in \cite{Than87}, $T_m$ is bounded on $L^p(\mathbb R^d)$ for $1<p<\infty$ provided that
\begin{align*}
    |\triangle_k ^j m(k)| \leq C_N k^{-j}\  \text{for} \ j=0, 1, \dots, N,
\end{align*}
whenever $N>\frac{d}{2}$, where $\triangle_k^j$ is the $j$-th forward finite difference.
This above result is shown by use of the Littlewood--Paley $g$-functions, see \cite{Bonami, Than87}.

In this paper,  our goal is  to show a Mikhlin--H\"ormander multiplier theorem for the Schr\"odinger operator $\AH$, which serves as an example for which the multiplier is defined in both continuous  and discrete variables.
We remark that  the operator $\AH$ is a polynomial perturbation of the Laplacian operator. Some multiplier results and Littlewood--Paley square function estimates for operators with polynomial perturbations have been established in \cite{Dziubanski98, Dziubanski97, Dziubanski99} by using nilpotent Lie algebras.  Recently, Killip, Miao, etc,  make use of by the {\CZ}  singular integral operator theory in  \cite{Grafakos249,MuscaluSchlag} to show the Mikhlin-H\"ormander multiplier theorem for the Schr\"odinger operator $\mathcal{L}_a:=-\Delta+\tfrac{a}{|x|^2}$,  $a\geq-\tfrac{(d-2)^2}4$ in \cite{KMVZZ}. This result was crucially used in \cite{KMVZZ2017, MMZ} to obtain the scattering result of the solution for nonlinear Schr\"odinger and wave equations with the inverse-square potential. 

Our method closely relies on the structure of the operator $\AH$ and the Mehler formula, and offers a different view towards understanding the operator $\AH$. We can refer to a companion paper \cite{SuWangXu} for the Riesz transform and Sobolev spaces associated to  the operator $\AH$.

\subsection{Main result}
For smooth function $f\in C_0^\infty(\mathbb R^{d+1})$, $\AH f$ can be reformulated by Fourier analysis as follows:
\begin{align}\label{H decomposition}
     \AH f(\rho,x)&=
    \sum_{\mu\in\N^d} \frac{1}{\sqrt{2\pi}}\int_{\mathbb R} e^{i\tau \rho}(\tau^2+2|\mu|+d)  (\mathcal F_{\rho} f(\tau, \cdot), \Phi_{\mu}(\cdot)) \Phi_{\mu} (x)\dd \tau \notag\\
   &= \sum_{k=0}^\infty\frac{1}{\sqrt{2\pi}} \int_{\mathbb R} e^{i\tau \rho}(\tau^2+2k+d)P_k \mathcal F_{\rho} f(\tau, x) \dd \tau,
    \end{align}
where $\mathcal F_{\rho} f$ is the Fourier transform with respect to $\rho$,  and $P_k$ is the projection to the $k$th eigenspace of the operator $\AH$ in $x$, which is spanned by the eigenfunctions $\Phi_\mu$'s for $|\mu|=k$: see Section \ref{e:P_k} below.

Let  $m=m(\tau,k)$ be defined on $\mathbb R\times \mathbb N$. We define the operator $T_m$ for $\AH$ by
\begin{align}\label{equ:operm}
  T_mf(\rho, x)&=\sum_{k=0}^{\infty}\int_{\R}e^{i\tau\rho}
m(\tau,k)P_k(\mathcal{F}_\rho f)(\tau,x)\dd \tau, \quad \text{for any }\; f \in C_0^\infty(\mathbb R^{d+1}).
\end{align}
In particular, if  $m(\tau,k)=m(\tau^2+2k+d)$,  the  multiplier operator $T_m$ coincides with $m(\AH)$ defined by the functional calculus (see \cite{SuWangXu}),  so the multipliers defined in \eqref{equ:operm} are more general than those defined by the spectral measure.

%Our main result gives sufficient conditions on the multiplier  $m$ so that the operator $T_m$ defined by \eqref{equ:operm} is bounded on $L^p(\R^{d+1})$ for all $1<p<\infty$:
% To do this, we introduce the following forward finite difference operators, defined inductively as follows:
%\begin{equation*}
%  \triangle_{k} g(k)=g(k+1)-g(k)
%\end{equation*}
%and for $\ell\geq1$, they are defined by
%\begin{equation*}
%  \triangle_{k}^{\ell+1}g(k)=\triangle_{k}^{\ell}g(k+1)-\triangle_{k}^{\ell}g(k).
%\end{equation*}
 The main result in this paper is as follows:
\begin{theorem}[Mikhlin--H\"ormander multiplier]\label{thm:MikhlinmulHP}
Suppose that  a function $m(\tau,k)$ defined on $\mathbb R \times \mathbb N$ satisfies the estimates
\begin{gather}\label{equ:assummulf}
 \Big| \frac{\pa^N}{\pa\tau^N}m(\tau,k)\Big|
 \leq C(\tau^2+2k+d)^{-\frac{N}{2}}  \ \text{and} \  \big|\triangle_k^N
 m(\tau,k)\big|\leq C(\tau^2+2k+d)^{-N}
\end{gather}
for all $0\leq N\leq \lfloor \frac{d+1}2\rfloor+1$. Then,  we have for any $1<p<\infty$
\begin{align*}
   \|T_m f\|_{L^p(\mathbb R^{d+1})}  \leq C \|f\|_{L^p(\mathbb R^{d+1})}.
\end{align*}
\end{theorem}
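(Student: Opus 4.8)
The plan is to adapt the Littlewood--Paley $g$-function method of Stein and Thangavelu to the joint continuous/discrete spectral resolution \eqref{H decomposition} of $\AH$. Since the two pieces $-\pa_\rho^2$ and $-\Delta_x+|x|^2$ of $\AH$ commute, the heat semigroup factors as $e^{-t\AH}=e^{t\pa_\rho^2}\otimes e^{-tH}$ with $H=-\Delta_x+|x|^2$ the Hermite operator, so its integral kernel $p_t\big((\rho,x),(\rho',x')\big)$ is the product of the one--dimensional heat kernel in $\rho$ and the Mehler kernel in $x$. From this I would extract the kernel estimates that drive the rest of the argument: a Gaussian upper bound $|p_t|\lesssim t^{-(d+1)/2}e^{-c(|\rho-\rho'|^2+|x-x'|^2)/t}$ for $0<t\le1$, together with matching bounds for $t\,\pa_t p_t$ and $t^{1/2}\nabla p_t$, and for $t\ge1$ an additional exponential factor $e^{-ct}$ coming from the fact that $\AH\ge d>0$. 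These are exactly the heat kernel estimates referred to in the abstract.

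Next I would introduce the Littlewood--Paley square function attached to $\AH$,
\begin{equation*}
 g(f)(\rho,x)=\Big(\int_0^\infty \big|t\,\pa_t e^{-t\AH}f(\rho,x)\big|^2\,\frac{\dd t}{t}\Big)^{1/2},
\end{equation*}
and its Stein--type companion $g^\ast_\lambda(f)$, in which $|t\,\pa_t e^{-t\AH}f(\rho',x')|^2$ is averaged over $(\rho',x')\in\R^{d+1}$ against the tangential weight $\big(t^{1/2}/(t^{1/2}+|(\rho,x)-(\rho',x')|)\big)^{\lambda(d+1)}$ and $\dd t/t$. Using the kernel estimates above and vector--valued \CZ{} theory one obtains $\|g(f)\|_{L^p}\lesssim\|f\|_{L^p}$ for $1<p<\infty$, while the reverse bound $\|f\|_{L^p}\lesssim\|g(f)\|_{L^p}$ follows from the standard polarization/duality identity for the $g$-function. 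Similarly one proves $\|g^\ast_\lambda(f)\|_{L^p}\lesssim\|f\|_{L^p}$ for $\lambda$ in the range allowed by $p$ and $d+1$ (immediate for $2\le p<\infty$ once $\lambda>1$, and for $1<p<2$ via Stein's weighted $L^2$ estimate $\int (g^\ast_\lambda f)^2 w\lesssim \int f^2 \,Mw$ against the Hardy--Littlewood maximal function $M$, valid when $\lambda p>2$).

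The core of the proof is the pointwise domination $g(T_m f)(\rho,x)\lesssim g^\ast_\lambda(f)(\rho,x)$. To establish it I would first observe that $e^{-t\AH}T_m f$ is the operator \eqref{equ:operm} with symbol $(\tau,k)\mapsto e^{-t(\tau^2+2k+d)}m(\tau,k)$; applying $t\,\pa_t$ and then re-expanding the symbol in the heat kernels $e^{-s(\tau^2+2k+d)}$, one integrates by parts $\lfloor\frac{d+1}{2}\rfloor+1$ times in $\tau$ and sums by parts the same number of times in $k$. This transfers the $t$-derivative onto the heat kernels (producing powers of $s$) at the price of the factors $\pa_\tau^N m$ and $\triangle_k^N m$, which by \eqref{equ:assummulf} are $O\big((\tau^2+2k+d)^{-N/2}\big)$ and $O\big((\tau^2+2k+d)^{-N}\big)$ respectively --- exactly the decay needed for the resulting $s$-integral to converge. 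Resynthesising, $t\,\pa_t e^{-t\AH}T_m f(\rho,x)$ is realised as an average of $e^{-s\AH}f(\rho',x')$ over $s\sim t$ and $(\rho',x')$ with a kernel controlled by the $g^\ast_\lambda$ weight, and Cauchy--Schwarz gives the claimed domination. Here the regimes $t\le1$ and $t\ge1$ must be treated separately: for $t\le1$ one is in the genuine \CZ/Mehler regime where the Gaussian localization of $p_t$ combines with the polynomial gains harvested from the finite differences, whereas for $t\ge1$ the exponential decay furnished by $\AH\ge d>0$ makes crude estimates sufficient.

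Chaining the three inequalities yields $\|T_m f\|_{L^p}\lesssim\|g(T_m f)\|_{L^p}\lesssim\|g^\ast_\lambda(f)\|_{L^p}\lesssim\|f\|_{L^p}$ for $2\le p<\infty$; since the formal adjoint of $T_m$ is $T_{\overline m}$ and $\overline m$ satisfies the same hypotheses \eqref{equ:assummulf}, duality extends the bound to all $1<p<\infty$. I expect the principal difficulty to lie in the domination step: one must push the Mehler formula through the repeated summation- and integration-by-parts so that the Gaussian factor from $p_t$, the polynomial decay coming from $\triangle_k^N m$ and $\pa_\tau^N m$, and the tangential weight in $g^\ast_\lambda$ all fit together with $\lambda$ still admissible for every $1<p<\infty$; a secondary subtlety is that the Mehler kernel is not a small Gaussian perturbation for large $t$, so the large-time part of the $t$-integral has to be handled separately using the spectral gap. (An alternative would be to verify H\"ormander's integral condition for the kernel of $T_m$ directly, as in the \CZ{} approach, but the $g$-function route exploits the product structure of $\AH$ more transparently.)
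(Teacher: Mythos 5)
Your proposal correctly identifies the overall strategy of the paper — chain through a Littlewood--Paley $g$-function and its tangential companion $g^\ast$, controlling the kernel of $e^{-t\AH}T_m$ via the Mehler/heat kernel bound, then close for $p>2$ by a maximal-function/duality argument. The square functions you write down are, up to normalization, the same objects the paper uses, and your account of how the boundedness of $g$ and $g^\ast_\lambda$ is proved is sound.

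However, there is a genuine gap in the central domination step, and it comes from using the \emph{first-order} $g$-function $g(f)=\bigl(\int_0^\infty|t\pa_te^{-t\AH}f|^2\,\dd t/t\bigr)^{1/2}$ on the left-hand side. The paper instead proves $g_{N+1}(T_mf)\lesssim g_N^\ast(f)$ with
\begin{equation*}
g_{N+1}(f)=\Bigl(\int_0^\infty\bigl|\pa_t^{N+1}e^{-t\AH}f\bigr|^2\,t^{2N+1}\,\dd t\Bigr)^{1/2},
\qquad N=\Bigl\lfloor\tfrac{d+1}{2}\Bigr\rfloor+1,
\end{equation*}
and the higher order is not a cosmetic choice. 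Writing $\pa_t^{N+1}e^{-2t\AH}T_mf=\pa_t^N(e^{-t\AH}T_m)\bigl(\pa_te^{-t\AH}f\bigr)$, the $N$ extra $t$-derivatives land on the kernel $M_t$ of $e^{-t\AH}T_m$ and produce the multiplicative factor $(\tau^2+2k+d)^N$ in its symbol. When one then integrates by parts $N$ times in $\tau$ (or sums by parts $N$ times in $k$), the Mikhlin--H\"ormander decay $|\pa_\tau^N m|\lesssim(\tau^2+2k+d)^{-N/2}$, $|\triangle_k^N m|\lesssim(\tau^2+2k+d)^{-N}$ is paired against this $(\tau^2+2k+d)^{N}$ and, via $\lambda^{N/2}e^{-t\lambda}\lesssim t^{-N/2}e^{-t\lambda/2}$, yields the crucial extra $t^{-N}$ gain needed to absorb the tangential weight $(1+t^{-1}|z'-z|^2)^{N}$. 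In your version the kernel carries no compensating polynomial factor: after integrating by parts $j$ times in $\tau$, the leading term is $e^{-t(\tau^2+2k+d)}\pa_\tau^j m$, whose $L^2_{z'}$-norm squared is controlled only by $\int\sum_k e^{-2t(\tau^2+2k+d)}(\tau^2+2k+d)^{-j}\Phi_k(x,x)\,\dd\tau$. Because the joint spectrum satisfies $\tau^2+2k+d\ge d>0$, the factor $(\tau^2+2k+d)^{-j}$ is merely a bounded constant and contributes no power of $t$; one gets only $t^{-(d+1)/2}$, whereas closing the Cauchy--Schwarz step requires $t^{-(d+1)/2+j}$. This is precisely where the discrete/shifted spectrum departs from the Euclidean case, where $|\xi|$ ranges over all of $(0,\infty)$ and $|\xi|^{-j}e^{-t|\xi|^2}$ genuinely produces $t^{j}$. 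So the first-order $g$ does not see the H\"ormander decay of $m$, and the estimate $g(T_mf)\lesssim g_\lambda^\ast(f)$ as stated cannot hold; you must work with $g_{N+1}$.

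A secondary point you gloss over: the ``summation by parts $N$ times in $k$'' needed for the $(x-x')^\beta$ moments of $M_t$ is not a straightforward Abel summation. It relies on Thangavelu's commutation identity (Lemma~3.2.3 of \cite{Than93book}) expressing $(x-x')^\beta\Phi_k(x,x')$ in terms of finite differences $\triangle_k^{|\delta|}$ and the ladder operators $(A'_j-A_j)^{\gamma_j}$, together with the growth bound $(A'-A)^\gamma\Phi_k(x,x')\sim(2(k+1))^{|\gamma|/2}\sum\Phi_{\mu+\tau}(x)\Phi_{\mu+\sigma}(x')$. Once that identity is in hand, both assumptions in \eqref{equ:assummulf} are used in tandem to control $\triangle_k^{|\delta|}$ of the symbol of $\pa_t^N M_t$. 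Without it, the $x$-moments of $M_t$ cannot be estimated.
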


\begin{remark}
The similar result for the generalized
partial harmonic oscillator $-\Delta_y-\Delta_x+|x|^2$ with $y\in\R^{d_1}$ and $x\in\R^{d_2}$ holds by the same argument.
\end{remark}

  Let  $m(r)\in C_0^\infty(\mathbb R;[0,1])$  with $\operatorname{supp} m \subseteq [\frac38,\frac 34]$. Denote $m_j(r)=m(2^{-j} r)$, and the operator $ \Delta_j f(\rho, x)=T_{m_j(\sqrt{\tau^2+2k+d})}  f(\rho, x).$  As a direct consequence of Theorem \ref{thm:MikhlinmulHP} and Khintchine's inequality, we have the following Littlewood--Paley square function estimates for the operator $\AH$.
\begin{corollary}
%\begin{align*}
%    m(r)=\begin{cases}
%        1,  &   1/2\leq  r\leq 2; \\
%        0,  &  r \leq 1/4 \  \text{or} \ r \geq 4.
%    \end{cases}
%\end{align*}
For $1<p<\infty$, we have
 \begin{align*}
 \|f\|_{L^p(\mathbb R^{d+1})} \simeq \bigg\|\bigg(\sum_{j=0}^\infty |\Delta_j f|^2\bigg)^{1/2}\bigg\|_{L^p(\mathbb R^{d+1})}.
 \end{align*} Furthermore, for $\alpha\ge 0$, $1<p<\infty $, the Sobolev spaces ${W}_{\AH}^{\alpha, p}$ associated to the operator $\AH$ (see \cite{SuWangXu}) can be characterized by
 \begin{align*}
    \|f\|_{{W}_{\AH}^{\alpha, p}(\mathbb   R^{d+1})} \simeq \Big\|\big(\sum_{j=0}^\infty |2^{j\alpha}\Delta_j f|^2\big)^{1/2}\Big\|_{L^p(\mathbb R^{d+1})}.
 \end{align*}
\end{corollary}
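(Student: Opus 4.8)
The plan is to deduce the Corollary from Theorem~\ref{thm:MikhlinmulHP}: the forward square function inequality by the classical Rademacher--function device, its reverse by duality, and the Sobolev characterization by a rescaling of the symbol. Throughout write $R=R(\tau,k)=\sqrt{\tau^2+2k+d}$ and recall that $\AH$ has spectrum contained in $[d,\infty)$. Let $(r_j)_{j\ge0}$ be the Rademacher functions on $[0,1]$ and, for $t\in[0,1]$, set $m^t(\tau,k)=\sum_{j\ge0}r_j(t)\,m(2^{-j}R)$, so that $T_{m^t}f=\sum_{j\ge0}r_j(t)\,\Delta_j f$. The key step is to check that $m^t$ lies in the symbol class \eqref{equ:assummulf} with constants independent of $t$; granting this, Theorem~\ref{thm:MikhlinmulHP} gives $\bigl\|\sum_j r_j(t)\Delta_jf\bigr\|_p\le C\|f\|_p$ uniformly in $t$, and integrating the $p$-th power over $t\in[0,1]$ and applying Khintchine's inequality pointwise in $(\rho,x)$ yields
\[
  \Bigl\|\bigl(\textstyle\sum_{j}|\Delta_jf|^2\bigr)^{1/2}\Bigr\|_{L^p(\R^{d+1})}\le C\,\|f\|_{L^p(\R^{d+1})},\qquad 1<p<\infty.
\]

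Verifying this symbol estimate is, I expect, the only genuinely technical point, since it mixes the continuous and the discrete variable and must be uniform over all choices of signs. Each summand $m(2^{-j}R)$ is supported where $R\sim2^{j}$, so for fixed $(\tau,k)$ only $O(1)$ of them are nonzero, and this count is stable under the signs; hence it suffices to bound one summand uniformly in $j$. For the $\tau$-derivatives one applies the \FDB\ to $\tau\mapsto R\mapsto m(2^{-j}R)$, using that on the support $|\pa_s^\ell(m(2^{-j}s))|=2^{-j\ell}|m^{(\ell)}(2^{-j}s)|\lesssim R^{-\ell}$ and $|\pa_\tau^\ell R|\lesssim R^{1-\ell}$; the powers of $R$ combine to give exactly $R^{-N}=(\tau^2+2k+d)^{-N/2}$. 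For the finite differences $\triangle_k^N$ one first uses the discrete mean value identity $\triangle_k^N g(k)=g^{(N)}(\xi)$ with $\xi\in(k,k+N)$ for the smooth extension $g(s)=m(2^{-j}\sqrt{\tau^2+2s+d})$, then applies the \FDB\ again with $|\pa_s^\ell\sqrt{\tau^2+2s+d}|\lesssim(\tau^2+2s+d)^{\frac12-\ell}$, arriving at $(\tau^2+2\xi+d)^{-N}\le(\tau^2+2k+d)^{-N}$; the overlap count is unaffected because $\sqrt{\tau^2+2(k+N)+d}/\sqrt{\tau^2+2k+d}\le\sqrt{1+2N/d}$ for the fixed range $N\le\lfloor\frac{d+1}2\rfloor+1$.

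For the reverse inequality one argues by duality using the forward bound at the conjugate exponent $p'$. Choose a real-valued companion bump $\widetilde m\in C_0^\infty(\R)$, supported in a bounded number of dyadic octaves (possibly after harmlessly broadening $\operatorname{supp} m$), with $\sum_{j\ge0}\widetilde m(2^{-j}r)\,m(2^{-j}r)\equiv1$ on $[d,\infty)$, and set $\widetilde\Delta_j=T_{\widetilde m(2^{-j}R)}$; the functional calculus for $\AH$ then gives the reproducing formula $f=\sum_{j\ge0}\widetilde\Delta_j\Delta_jf$ on $L^2$, hence on $L^p$ by density. Since the $\Delta_j$ and $\widetilde\Delta_j$ are self-adjoint, for $f\in C_0^\infty$ and $\|h\|_{p'}\le1$,
\[
  |\langle f,h\rangle|=\Bigl|\sum_{j\ge0}\langle\Delta_jf,\widetilde\Delta_jh\rangle\Bigr|\le\int_{\R^{d+1}}\bigl(\textstyle\sum_j|\Delta_jf|^2\bigr)^{1/2}\bigl(\textstyle\sum_j|\widetilde\Delta_jh|^2\bigr)^{1/2}\le C\,\Bigl\|\bigl(\textstyle\sum_j|\Delta_jf|^2\bigr)^{1/2}\Bigr\|_p\|h\|_{p'},
\]
where the last inequality combines Hölder in $(\rho,x)$ with the forward square function bound at exponent $p'$ applied to the family $\{\widetilde\Delta_j\}$. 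Taking the supremum over $h$ gives $\|f\|_p\le C\bigl\|(\sum_j|\Delta_jf|^2)^{1/2}\bigr\|_p$, whence the first equivalence.

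Finally, the Sobolev statement reduces to the square function equivalence. Recall from \cite{SuWangXu} that $\|f\|_{W^{\alpha,p}_{\AH}}\simeq\|(\AH)^{\alpha/2}f\|_p$, the fractional power being unambiguous because $\AH\ge d>0$. On the support of $m(2^{-j}\cdot)$ one has the exact identity $2^{j\alpha}m(2^{-j}r)=r^{\alpha}\,\widetilde m_\alpha(2^{-j}r)$ with $\widetilde m_\alpha(s):=s^{-\alpha}m(s)\in C_0^\infty(\R)$; therefore, with $r=R(\tau,k)$,
\[
  2^{j\alpha}\Delta_jf=\widetilde\Delta_j^{(\alpha)}\bigl((\AH)^{\alpha/2}f\bigr),
\]
where $\widetilde\Delta_j^{(\alpha)}$ is the Littlewood--Paley piece attached to $\widetilde m_\alpha$. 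Applying the square function equivalence just established, with bump $\widetilde m_\alpha$, to the function $(\AH)^{\alpha/2}f$ yields
\[
  \Bigl\|\bigl(\textstyle\sum_j|2^{j\alpha}\Delta_jf|^2\bigr)^{1/2}\Bigr\|_p\simeq\|(\AH)^{\alpha/2}f\|_p\simeq\|f\|_{W^{\alpha,p}_{\AH}},
\]
as claimed.
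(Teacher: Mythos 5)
The paper explicitly omits the proof (``We omit the proof and the readers can refer to \cite{Grafakos249, MuscaluSchlag}''), indicating precisely the route you take: randomized multipliers plus Khintchine for the forward square-function inequality, duality with a reproducing companion family for the reverse, and the observation $2^{j\alpha}m(2^{-j}r)=r^{\alpha}\widetilde m_\alpha(2^{-j}r)$ to reduce the Sobolev claim to the unweighted equivalence applied to $\AH^{\alpha/2}f$. The one computation specific to this setting, the uniform verification that $m^t(\tau,k)=\sum_j r_j(t)m(2^{-j}R)$ satisfies \eqref{equ:assummulf}, is carried out correctly: the bounded overlap in $j$ (stable under shifts of $k$ by at most $N$), the \FDB{} on $\tau\mapsto R\mapsto m(2^{-j}R)$ producing $R^{-N}=(\tau^2+2k+d)^{-N/2}$, and the mean-value reduction of $\triangle_k^N$ to an $N$-th derivative of the smooth extension with $|\pa_s^\ell\sqrt{\tau^2+2s+d}|\lesssim(\tau^2+2s+d)^{1/2-\ell}$ combining to give the exponent $-N$ rather than $-N/2$. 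So your proposal is correct and follows the same path the paper points to.

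One caveat that affects the paper's statement as much as your write-up: with $\operatorname{supp} m\subseteq[\tfrac38,\tfrac34]$ and $m\in C_0^\infty$, the dyadic supports $[\tfrac38\cdot2^{j},\tfrac34\cdot2^{j}]$ abut only at their endpoints, where $m$ vanishes to infinite order; hence $\sum_{j\ge0}m(2^{-j}r)$ (and $\sum_j m(2^{-j}r)^2$) vanish at $r=\tfrac34\cdot2^{j}$, which lies in the spectral range $[\sqrt d,\infty)$ for all large $j$. No $\widetilde m\in C_0^\infty$ can then satisfy $\sum_j\widetilde m(2^{-j}r)\,m(2^{-j}r)\equiv1$ there, and the reverse (lower) square-function bound genuinely fails for spectral data concentrated near these radii. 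Your parenthetical ``harmlessly broadening $\operatorname{supp}m$'' is therefore not harmless but in fact the needed repair: the corollary requires overlapping dilates (e.g.\ $\operatorname{supp}m\subseteq[\tfrac38,1]$) or an explicit non-degeneracy hypothesis such as $\sum_{j\ge0}m(2^{-j}r)^2\gtrsim1$ on $[\sqrt d,\infty)$. With that amendment your argument is complete as written.
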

We omit the proof and the readers can refer  to \cite{Grafakos249, MuscaluSchlag}.

Lastly, this paper is organized as follows:  in Section \ref{subseq:hermite-fns}, we introduce some preliminary results about Hermite functions, the Mehler formula and the heat kernel of the operator $\AH$.   In Section \ref{sect:proof}, we  show the proof of Theorem \ref{thm:MikhlinmulHP}.

\subsection*{Acknowledgements} The authors would like to thank Professor Changxing Miao for his valuable comments and suggestions. G. Xu  was supported by National Key Research and Development Program of China (No. 2020YFA0712900) and by NSFC (No. 11831004).

\section{Preliminaries}\label{subseq:hermite-fns}
\subsection{Hermite functions}
We first recall the Hermite functions on $\R^d$ as in \cite{Than93book}.
  The Hermite functions $h_k$ on $\mathbb R$ are defined by
 \begin{align*}
     h_k(x)=(2^k k! \sqrt{\pi})^{-1/2} (-1)^k \frac{\dd^k}{\dd x^k}(e^{-x^2}) e^{-x^2/2}.
 \end{align*}
 Let $\mu=(\mu_1,\dots,\mu_d)$ be a multi-index and $x\in\R^d$. The Hermite functions $\Phi_\mu$ on $\mathbb R^d$, is defined by taking the product of the 1-dimensional Hermite functions $h_{\mu_j}(x_j)$:
\begin{equation*}
  \Phi_\mu(x)=\prod_{j=1}^{d}h_{\mu_j}(x_j).
\end{equation*}
The functions $\Phi_\mu$ form a complete orthonormal system for $L^2(\R^d)$.
% Hence, the Hermite expansion of  a $L^2(\R^d)$-function $f(x)$ is given by
%\begin{align*}
%  f(x)&= \sum_{\mu\in\N^d} f_\mu\Phi_{\mu}(x)
%  =\sum_{k=0}^{\infty} P_kf,
%\end{align*}
%where $f_\mu$ is the Fourier--Hermite coefficients defined by
%$$f_\mu=\int_{\R^d}f(x)\Phi_{\mu}(x)\dd x,$$
%and $P_kf$ is the projection onto the $k$th eigenspace
%\begin{equation}\label{equ:egispace}
%  P_kf=\sum_{|\mu|=k}f_\mu\Phi_{\mu}(x).
%\end{equation}
If we define  the operators $A_j=-\frac{\pa}{\pa x_j}+x_j$ for $1\leq j \leq d$, then
\begin{align}\label{equ:ajphiest}
A_j \Phi_{\mu}  =\sqrt{2(\mu_j+1)}  \Phi_{\mu+e_j},
\end{align}
where $e_j$ is the $j$th coordinate vector in $\mathbb N^d$.

Denote by $P_k$ the spectral projection to the $k$th eigenspace of $-\Delta_x+|x|^2$,
 \begin{align}
P_kf(x)=\int_{\mathbb R^d} \sum_{|\mu|=k}\Phi_\mu(x)\Phi_\mu(x') f(x')\dd x'. \label{e:P_k}
 \end{align}
 These projections  are the integral operators with kernels
 \begin{align*}
     \Phi_k(x,x')=\sum_{|\mu|=k}\Phi_\mu(x)\Phi_\mu(x').
 \end{align*}
The Mehler formula for $ \Phi_k(x,x')$ is
\begin{align}\label{Mehler's formula}
   \sum_{k=0} ^\infty r^k \Phi_k(x,x') =\pi ^{-d/2}(1-r^2)^{-d/2} e^{-\frac{1}{2}\frac{1+r^2}{1-r^2}(|x|^2+|x'|^2)+\frac{2r x \cdot x'}{1-r^2}},
\end{align}
for $0<r<1$, see \cite[p. 6]{Than93book}.

The following lemmas are the direct consequences of the Mehler formula,  which will be frequently used in the next section.

\begin{lemma}[\cite{Than93book}, P. 92]  For all $t>0$, we have
%\begin{enumerate}
%\item[$(i)$] \begin{equation}\label{equ:phikpoits}
%  \Phi_k(x,x)\lesssim k^{\frac{d}{2}-1},\;\forall\;k\in\N,
%\end{equation}
%
%  \item[$(ii)$]
  \begin{gather}\label{equ:phisumest}
  \sum_{\mu\in\N^d}e^{-t|\mu|}\Phi_{\mu}(x)^2\lesssim t^{-\frac{d}{2}},\;\forall\;x\in\R^d,
  \\
%\end{equation}
%
%
%  \item[$(iii)$] \begin{equation}
  \label{equ:intprop}
  \int_{\R^d}\Big(\sum_{\mu\in\N^d}e^{-t(2|\mu|+d)}\Phi_\mu(x)^2\Big)\dd x
  =C(\sinh t)^{-d}.
\end{gather}
%  \item[$(iv)$] Denote $A_j=-\frac{\pa}{\pa x_j}+x_j$ with $1\leq j\leq d$, then, we have by \cite[p. 10]{Than93book}
%  \begin{equation}\label{equ:ajphiest}
%    A_j\Phi_\mu(x)=\big(2(|\mu|+1)\big)^\frac12\Phi_{\mu+e_j}(x),\;e_j=(0,\dots,0,1,0,\dots,0).
%  \end{equation}
%\end{enumerate}
\end{lemma}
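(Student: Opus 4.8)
The two estimates \eqref{equ:phisumest} and \eqref{equ:intprop} are both immediate consequences of the Mehler formula \eqref{Mehler's formula}, and the strategy is to substitute an appropriate value of $r$ and exploit the positivity of the Gaussian on the right-hand side.

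First I would prove \eqref{equ:intprop}, which is the cleaner of the two. Set $r=e^{-2t}$, so that $\sum_{\mu}e^{-t(2|\mu|+d)}\Phi_\mu(x)^2 = e^{-td}\sum_{k\ge 0} r^k\Phi_k(x,x) = e^{-td}\sum_k r^k\Phi_k(x,x)$; applying the Mehler formula at $x'=x$ gives the explicit expression
\begin{align*}
\sum_{\mu\in\N^d}e^{-t(2|\mu|+d)}\Phi_\mu(x)^2
= \pi^{-d/2}\,e^{-td}(1-r^2)^{-d/2}\,e^{-\frac{1-r}{1+r}\,2|x|^2},
\end{align*}
where I have used $-\frac{1+r^2}{1-r^2}+\frac{2r}{1-r^2}=-\frac{1-r}{1+r}$. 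Now integrate in $x$ over $\R^d$: the Gaussian integral $\int_{\R^d}e^{-a|x|^2}\dd x=(\pi/a)^{d/2}$ with $a=2\frac{1-r}{1+r}$ produces a factor $\pi^{d/2}\big(\frac{1+r}{2(1-r)}\big)^{d/2}$, and the powers of $\pi$ cancel. Collecting the $r$-dependent factors gives $e^{-td}(1-r^2)^{-d/2}\big(\frac{1+r}{2(1-r)}\big)^{d/2} = e^{-td}\big(2(1-r)^2\big)^{-d/2} = \big(e^{2t}(1-e^{-2t})^2/2\big)^{-d/2}$. Finally $e^{2t}(1-e^{-2t})^2 = (e^{t}-e^{-t})^2 = 4\sinh^2 t$, so the whole expression equals $(2\sinh^2 t)^{-d/2}\cdot 2^{-d/2}\cdot 2^{d/2}=(\sinh t)^{-d}$ up to tracking the constant $C$; this yields \eqref{equ:intprop}.

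For \eqref{equ:phisumest} I would argue by a similar substitution but now keeping $x'=x$ and choosing $r$ tied to the weight $e^{-t|\mu|}$. Writing $\sum_\mu e^{-t|\mu|}\Phi_\mu(x)^2 = \sum_k e^{-tk}\Phi_k(x,x) = \sum_k s^k\Phi_k(x,x)$ with $s=e^{-t}\in(0,1)$, the Mehler formula at $x'=x$ gives
\begin{align*}
\sum_{\mu\in\N^d}e^{-t|\mu|}\Phi_\mu(x)^2 = \pi^{-d/2}(1-s^2)^{-d/2}\,e^{-\frac{1-s}{1+s}\,2|x|^2} \le \pi^{-d/2}(1-s^2)^{-d/2},
\end{align*}
since the exponential factor is $\le 1$ for all $x\in\R^d$. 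It remains to note that $(1-s^2)^{-d/2} = (1-e^{-2t})^{-d/2} \lesssim t^{-d/2}$: indeed $1-e^{-2t}\ge c\min(t,1)\gtrsim$ (for the relevant range, or uniformly after noting $1-e^{-2t}\ge t e^{-2t}$ and splitting $t\le 1$ versus $t\ge 1$ where $(1-s^2)^{-d/2}\le(1-e^{-2})^{-d/2}$ is bounded while $t^{-d/2}$ is also bounded below — here one uses $\lesssim$ allowing the implied constant to depend on $d$). This gives \eqref{equ:phisumest}.

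\textbf{Main obstacle.} There is no serious obstacle: the only point requiring a little care is the elementary comparison $1-e^{-2t}\simeq \min(t,1)$ used to pass from the exact Mehler expression to the claimed power $t^{-d/2}$ in \eqref{equ:phisumest}, and the bookkeeping of dimensional constants in \eqref{equ:intprop} (where the identity is in fact exact, $C=1$, after the computation above). One should also remark that term-by-term evaluation of $\sum_k r^k\Phi_k(x,x)$ via \eqref{Mehler's formula} is justified for $0<r<1$ since the series converges absolutely (all terms $\Phi_k(x,x)=\sum_{|\mu|=k}\Phi_\mu(x)^2\ge 0$ and the total sums to a finite value by the formula), so monotone convergence applies when integrating in $x$.
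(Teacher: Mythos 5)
Your route --- evaluating the Mehler formula \eqref{Mehler's formula} on the diagonal $x'=x$ and then either bounding the Gaussian factor by $1$ (for \eqref{equ:phisumest}) or integrating it (for \eqref{equ:intprop}) --- is exactly the intended one: the paper offers no proof, merely citing Thangavelu, and your computation for \eqref{equ:intprop} is correct in substance. (Two small arithmetic slips there: on the diagonal the exponent is $-\frac{1-r}{1+r}|x|^2$, not $-\frac{1-r}{1+r}\,2|x|^2$, and the exact constant works out to $C=2^{-d}$ rather than $C=1$; since \eqref{equ:intprop} only asserts the identity up to some constant $C$, this is harmless.)

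The genuine problem is the last step of your argument for \eqref{equ:phisumest}. The inequality $(1-e^{-2t})^{-d/2}\lesssim t^{-d/2}$ is false on $t\geq 1$: there the left-hand side tends to $1$ while $t^{-d/2}$ tends to $0$, and your justification (``$t^{-d/2}$ is also bounded below'' for $t\geq 1$) is not true on the unbounded interval $[1,\infty)$. What your Mehler computation actually yields is the uniform bound $\sum_\mu e^{-t|\mu|}\Phi_\mu(x)^2\leq \pi^{-d/2}(1-e^{-2t})^{-d/2}\simeq \min(t,1)^{-d/2}$, and this cannot be improved: at $x=0$ the single term $\mu=0$ already contributes $\Phi_0(0)^2=\pi^{-d/2}$, so the left side of \eqref{equ:phisumest} stays bounded away from $0$ as $t\to\infty$, and the bound $t^{-d/2}$ as literally stated fails for large $t$. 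So instead of patching the $t\geq 1$ case, the honest conclusion is that \eqref{equ:phisumest} must be read either for $0<t\lesssim 1$ (as in Thangavelu) or with the full heat weight $e^{-t(2|\mu|+d)}$, in which case your own computation gives $e^{-td}(1-e^{-4t})^{-d/2}\lesssim t^{-d/2}$ for all $t>0$. Either version suffices for the places where the paper invokes the lemma (the proofs of \eqref{equ:pointmtes} and \eqref{equ:mtn0est}), because the factor $e^{-td/2}$ is retained there and absorbs the large-$t$ regime.
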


\subsection{Heat kernel for the operator $\AH$}  We write $z=(\rho, x)$ or $z'=(\rho', x')$ to denote variables in $\mathbb R^{d+1}$ with $\rho, \rho' \in \mathbb R$ and $x, x' \in \mathbb R^d$.

%From \eqref{H decomposition} the spectral measure of $\AH$ is given by
%\begin{align*}
%    E(\lambda) f(\rho, x)= \sum_{k=0}^\infty\frac{1}{\sqrt{2\pi}} \int_{\mathbb R} e^{i\tau \rho}\textbf{1}_{\{|\tau^2+2k+d| < \lambda\} } P_k \mathcal F_{\rho} f(\tau, x) \dd \tau.
%\end{align*}
%Thus, for a Borel measurable function $F$ defined on $\mathbb R_{+}$, we define the operator $F(\AH)$ as
%\begin{align}\label{FCofAH}
%    F(\AH )f(\rho, x)=\sum_{k=0}^\infty \frac{1}{\sqrt{2\pi}} \int_{\mathbb R} e^{i\tau \rho}F(\tau^2+2k+d)P_k \mathcal F_{\rho} f(\tau, x) \dd \tau,
%\end{align}
%as long as the right hand side makes sense.
% whenever $f$ has the following expansion
% \begin{align}
%     f(\rho, x)=\sum_{k=0}^\infty\int_{\mathbb R} e^{i\tau \rho} P_k (\mathcal F_{\rho}f)(\tau, x)\dd \tau.
%\end{align}

From \eqref{H decomposition},  we can define the heat semigroup with $ f\in C_0^\infty(\mathbb R^{d+1})$ as follows:
%$e^{-t{\AH}}$
\begin{align}\nonumber
    e^{-t \AH}f(\rho,x)&=  \sum_{\mu\in\N^d}\frac{1}{\sqrt{2\pi}} \int_{\mathbb R}e^{i\tau \rho} e^{-t(\tau^2+2|\mu|+d)  }  (\mathcal F_{\rho} f(\tau, \cdot), \Phi_{\mu}(\cdot)) \Phi_{\mu} (x)\dd \tau \\ \notag %\label{heat-formula-1}
    &=\sum_{k=0}^\infty \frac{1}{\sqrt{2\pi}}\int_{\mathbb R} e^{i\tau \rho} e^{-t(\tau^2+2k+d)  }  {P_k}(\mathcal F_{\rho} f)(\tau, x)  \dd \tau\\\nonumber
    &=\int_{\mathbb R^{d+1}} K(t,z,z') f(z') \dd z',
\end{align}
%for any $f\in C_0^\infty(\mathbb R^{d+1})$,
%Comparing with Mehler's formula \eqref{Mehler's formula}, the Schwartz kernel for $e^{-t \AH}$ is 
where  we use the Mehler formula \eqref{Mehler's formula} in the last step  and 
\begin{align}\label{kernel of semigroup}
   K(t,z,z')
    =2^{-\frac{d+2}{2}}\pi^{-\frac{d+1}{2}}t^{-1/2}(\sinh 2t)^{-d/2} e^{-B(t,z,z')},
\end{align}
and
\begin{align*}
    B(t, z, z')=\frac{1}{4}(2\coth2t-\tanh t)|x-x'|^2+\frac{\tanh t}{4} |x+x'|^2+\frac{(\rho-\rho')^2}{4t}.
\end{align*}

%\begin{align*}
%    e^{-t \AH}f(\rho,x)&= \int_{\mathbb R^{d+1}} K(t, z, z') f(z')\dd z'
%    \end{align*}
%    where
%\begin{align}\label{kernel of semigroup}
%   K(t,z,z')
%    =2^{-\frac{d+2}{2}}\pi^{-\frac{d+1}{2}}t^{-1/2}(\sinh 2t)^{-d/2} e^{-B(t,z,z')},
%\end{align}
%and
%\begin{align*}
%    B(t, z, z')=\frac{1}{4}(2\coth2t-\tanh t)|x-x'|^2+\frac{1}{4}\tanh t |x+x'|^2+\frac{(\rho-\rho')^2}{4t}.
%\end{align*}
%

%Hence, by Hermite expansion in $x\in\R^d$ and Fourier transform in $\rho\in\R$, we can expand a function $f\in C_0^\infty(\R^{d+1})$ as
%\begin{equation*}
%f(\rho, x)=\sum_{k=0}^{\infty}\int_{\R}e^{i\tau\rho}
%P_k(\mathcal{F}_\rho f)(\tau,x)\dd \tau.
%\end{equation*}

%can diagonalize $\AH$  as
%\begin{align}\label{equ:hpdiag}
%  \AHf(z)&=\sum_{k=0}^{\infty}\int_{\R}e^{i\tau\rho}
%(\tau^2+2k+d)P_k(\mathcal{F}_\rho f)(\tau,x)\dd \tau.
%\end{align}
%Briefly recalling functional calculus in \cite{Tay},

\section{Proof of Theorem \ref{thm:MikhlinmulHP}}\label{sect:proof}
We follow  the  arguments as in \cite{Grafakos249,Stein}. It suffices to show the following estimates
\begin{equation}\label{ests}
  \|T_mf\|_{L^p}\leq C\|g_{N+1}(T_mf)\|_{L^p}\leq C\|g_{N}^\ast(f)\|_{L^p}\leq C\|f\|_{L^p}
\end{equation}
for some integer $N\in\mathbb{N}$.
% From \eqref{equ:operm} and \eqref{equ:egispace}, we know that the heat semigroup $e^{-t\AH}$ associated to $\AH$ is defined by
%\begin{align}\label{equ:heathp}
%& e^{-t\AH}f(z)\\
%&= \frac{1}{\sqrt{2\pi}}\sum_{k=0}^{\infty}\int_{\R}e^{i\tau\rho}
%e^{-t(\tau^2+2k+d)}P_k(\mathcal{F}_\rho f)(\tau,x)\dd \tau \nonumber \\\nonumber
%&=\frac{1}{\sqrt{2\pi}}\sum_{\mu\in\N^d}\int_{\R^{d+1}}\int_{\R}e^{i\tau(\rho-\rho')}
%e^{-t(\tau^2+2|\mu|+d)} f(z')\Phi_{\mu}(x')\dd \tau\dd z' \Phi_{\mu}(x)\\\nonumber
%&=\frac{1}{\sqrt{2\pi}}\sum_{k=0}^{\infty}\int_{\R^{d+1}}\int_{\R}e^{i\tau(\rho-\rho')}
%e^{-t(\tau^2+2k+d)}f(z')\Phi_k(x,x')\dd \tau\dd z',
%\end{align}
%where
%\begin{equation*}
%  \Phi_k(x,x')=\sum_{|\mu|=k}\Phi_{\mu}(x)\Phi_{\mu}(x').
%\end{equation*}
%Recalling the Mehler formula \cite[p.6]{Than93book},
%%\begin{equation*}
%%  \sum_{k=0}^{\infty}r^k\Phi_k(x,y)=\pi^{-\frac{d}{2}}(1-r^2)^{-\frac{d}{2}}e^{-\frac12\frac{1+r^2}{1-r^2}(|x|^2+|y|^2)+\frac{2rx\cdot y}{1-r^2}},
%%\end{equation*}
%we deduce that the Schwart kernel of the semigroup $e^{-t\AH}$ is given by
%\begin{align}\label{equ:heatsemkern}
%K(t, z, z')
%&=\sum_{k=0}^{\infty}\int_{\R}e^{i\tau(\rho-\rho')}
%e^{-t(\tau^2+2k+d)}\Phi_k(x,x')\dd \tau \nonumber\\
%&=\frac{1}{\sqrt{2\pi}}(2\pi)^{-\frac{d}{2}}(\sinh 2t)^{-\frac{d}{2}}
%t^{-\frac12}e^{-B(t,\rho,x,\rho',x')},
%\end{align}
%where
%\begin{align*}
%    B(t, z, z')=\frac{1}{4}(2\coth2t-\tanh t)|x-x'|^2+\frac{1}{4}\tanh t |x+x'|^2+\frac{(\rho-\rho')^2}{4t}.
%\end{align*}

First, given $N\in\N$, we define the Littlewood--Paley $g_N$-function by
\begin{equation*}
  g_N(f)(z)=\Big(\int_0^\infty\big|\pa_t^N e^{-t\AH}f(z)\big|^2t^{2N-1}\dd t\Big)^\frac12.
\end{equation*}

%A first property on the $g_N$ function is the following $L^2$ identity.
\begin{lemma}[]\label{prop:l2gn}
For each $N\geq1$ and $f\in L^2(\R^{d+1})$, there holds
\begin{equation}\label{equ:gnfl2equiv}
  \|g_N(f)\|_{L^2(\R^{d+1})}^2=2^{-2N}\Gamma(2N)\|f\|_{L^2(\R^{d+1})}^2.
\end{equation}

\end{lemma}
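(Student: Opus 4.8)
The plan is to pass to the spectral side and reduce the identity to an elementary Gamma integral. Writing $F:=\mathcal F_\rho f$ and differentiating the representation from \eqref{H decomposition} under the integral and the sum (legitimate for $t$ in compact subsets of $(0,\infty)$ because $\lambda^N e^{-t\lambda}$ decays rapidly in $\lambda$), one has
\[
  \partial_t^N e^{-t\AH}f(\rho,x)=\frac{(-1)^N}{\sqrt{2\pi}}\sum_{k=0}^\infty\int_{\mathbb R}e^{i\tau\rho}(\tau^2+2k+d)^N e^{-t(\tau^2+2k+d)}P_kF(\tau,x)\dd\tau.
\]
Since the integrand defining $g_N(f)(z)^2$ is nonnegative, Tonelli's theorem gives
\[
  \|g_N(f)\|_{L^2(\R^{d+1})}^2=\int_0^\infty t^{2N-1}\Big(\int_{\R^{d+1}}\big|\partial_t^N e^{-t\AH}f(\rho,x)\big|^2\dd\rho\dd x\Big)\dd t .
\]

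For fixed $t$ I would compute the inner integral by Plancherel's theorem in $\rho$ followed by the orthogonality of the spectral projections $\{P_k\}$ on $L^2(\R^d_x)$, so that all cross terms in $k,k'$ drop out after integrating in $x$. This yields
\[
  \int_{\R^{d+1}}\big|\partial_t^N e^{-t\AH}f\big|^2\dd\rho\dd x=\sum_{k=0}^\infty\int_{\R}(\tau^2+2k+d)^{2N}e^{-2t(\tau^2+2k+d)}\,\|P_kF(\tau,\cdot)\|_{L^2(\R^d)}^2\dd\tau .
\]
Substituting this back, interchanging the $t$-integration with the sum and the $\tau$-integration (again by Tonelli), and using the substitution $s=2t\lambda$ with $\lambda=\tau^2+2k+d>0$ in
\[
  \int_0^\infty t^{2N-1}\lambda^{2N}e^{-2t\lambda}\dd t=2^{-2N}\int_0^\infty s^{2N-1}e^{-s}\dd s=2^{-2N}\Gamma(2N),
\]
shows that the $\lambda$-dependence cancels identically. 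What remains is $2^{-2N}\Gamma(2N)\sum_k\int_{\R}\|P_kF(\tau,\cdot)\|_{L^2}^2\dd\tau$, which equals $2^{-2N}\Gamma(2N)\|f\|_{L^2(\R^{d+1})}^2$ by Plancherel in $\rho$ together with the completeness of the Hermite basis, i.e. $\sum_k P_k=\mathrm{Id}$ on $L^2(\R^d)$.

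The argument is essentially routine; the only care needed is the measurability and integrability bookkeeping for the two applications of Tonelli and for differentiating $e^{-t\AH}f$ termwise in $t$. Because every integrand above is nonnegative, Tonelli applies with no extra hypothesis, and the finiteness of the final expression is guaranteed by $f\in L^2$, since $\sum_k\int_{\R}\|P_kF(\tau,\cdot)\|_{L^2}^2\dd\tau=\|f\|_{L^2(\R^{d+1})}^2<\infty$. I do not expect any genuine obstacle here; the content of the lemma is precisely that the weight $t^{2N-1}$ is chosen so that the $\lambda$-integral is scale invariant.
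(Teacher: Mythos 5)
Your proof is correct and follows essentially the same route as the paper's: pass to the spectral side via Plancherel in $\rho$ and the orthogonality of the projections $P_k$, pull out the $t$-integral, and evaluate $\int_0^\infty t^{2N-1}\lambda^{2N}e^{-2t\lambda}\dd t = 2^{-2N}\Gamma(2N)$. The only difference is that you spell out the Tonelli/termwise-differentiation bookkeeping a bit more explicitly than the paper does; the substance is identical.
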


\begin{proof}
%By definition, we have
%\begin{align*}
%  \pa_t^Ne^{-t\AH}f(z)&=\sum_{k=0}^{\infty}\int_{\R}e^{i\tau\rho}(\tau^2+2k+d)^N
%e^{-t(\tau^2+2k+d)}P_k(\mathcal{F}_\rho f)(\tau,x)\dd \tau.
%\end{align*}
By the orthogonality of Hermite functions, we have
\begin{align*}
    \big\|\pa_t^Ne^{-t\AH}f(\rho,\cdot)\big\|_{L_x^2(\R^d)}^2=\sum_{k=0}^{\infty}\Big\|\int_{\R}e^{i\tau\rho}(\tau^2+2k+d)^N
e^{-t(\tau^2+2k+d)}P_k(\mathcal{F}_\rho f)(\tau,\cdot)\dd \tau\Big\|_{L_x^2(\R^d)}^2.
\end{align*}
%This together with the definition of Littlewood--Paley--Stein $g$-function and
It follows from the Plancherel theorem in  $\rho$ that
\begin{align*}
 & \|g_N(f)(z)\|_{L^2(\R^{d+1})}^2=\int_{\R^{d+1}}\int_0^\infty\big|\pa_t^N e^{-t\AH}f(z)\big|^2t^{2N-1}\dd t\dd z\\
% &=\sum_{k=0}^{\infty}\int_{\R^{d+1}}\int_0^\infty\Big|\int_{\R}e^{i\tau\rho}(\tau^2+2k+d)^N
%e^{-t(\tau^2+2k+d)}P_k(\mathcal{F}_\rho f)(\tau,\cdot)\dd \tau\Big|^2t^{2N-1}\dd t\dd z\\
 &=\sum_{k=0}^{\infty}\int_{\R^{d+1}}\int_0^\infty\Big|(\tau^2+2k+d)^N
e^{-t(\tau^2+2k+d)}P_k(\mathcal{F}_\rho f)(\tau,\cdot)\Big|^2t^{2N-1}\dd t\dd \tau\dd x\\
 &=\sum_{k=0}^{\infty}\int_{\R^{d+1}}|P_k(\mathcal{F}_\rho f)(\tau,\cdot)|^2\Big[\int_0^\infty(\tau^2+2k+d)^{2N}e^{-2t(\tau^2+2k+d)}t^{2N-1}\dd t\Big]\dd \tau\dd x\\
 &=2^{-2N}\Gamma(2N)\sum_{k=0}^{\infty}\int_{\R^{d+1}}|P_k(\mathcal{F}_\rho f)(\tau,\cdot)|^2\dd \tau\dd x \\ & =2^{-2N}\Gamma(2N)\|f\|_{L^2(\R^{d+1})}^2,
\end{align*}
which completes the proof. \qedhere
\end{proof}

\begin{lemma}[Equivalence of $L^p$ norms]\label{prop:equivnormgn}
Let $1<p<\infty$ and $N\in\N$. Then, there exist  $C_{1},C_{2}>0$ such that for all $f\in L^p$, we have
\begin{equation*}
 C_{1}\|f\|_{L^{p}(\R^{d+1})}\leq \|g_N(f)\|_{L^p(\R^{d+1})}\leq C_{2}\|f\|_{L^p(\R^{d+1})}.
\end{equation*}
\end{lemma}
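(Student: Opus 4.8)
The plan is to follow the classical Stein-type argument for Littlewood--Paley $g$-functions, adapted to the heat semigroup $e^{-t\AH}$. The upper bound $\|g_N(f)\|_{L^p} \lesssim \|f\|_{L^p}$ will come from a vector-valued Calder\'on--Zygmund / Littlewood--Paley argument: one views $t \mapsto t^{N-1/2}\pa_t^N e^{-t\AH}f(z)$ as a function valued in $L^2((0,\infty),\dd t)$ and shows that the corresponding operator $f \mapsto (t^{N-1/2}\pa_t^N e^{-t\AH}f)_{t>0}$ is bounded from $L^p(\R^{d+1})$ to $L^p(\R^{d+1}; L^2(0,\infty))$. The $L^2 \to L^2$ case is exactly Lemma \ref{prop:l2gn}, so by interpolation and duality it suffices to prove a weak-type $(1,1)$ bound, which in turn follows from a Hörmander-type integral condition on the operator-valued kernel
\[
\mathcal{K}(z,z') = \big(t^{N-1/2}\pa_t^N K(t,z,z')\big)_{t>0},
\]
namely $\int_{|z-z'|>2|z-z_0|} \|\mathcal{K}(z,z')-\mathcal{K}(z,z_0)\|_{L^2(0,\infty)}\,\dd z \lesssim 1$. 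Here $K(t,z,z')$ is the explicit Mehler-type heat kernel \eqref{kernel of semigroup}, so $\pa_t^N K$ can be differentiated explicitly and one extracts Gaussian-type upper bounds of the form $|\pa_t^N K(t,z,z')| \lesssim t^{-N} K_{c}(t,z,z')$ where $K_c$ is a sub-kernel enjoying standard on-diagonal decay $t^{-(d+1)/2}$ near $t=0$ and exponential decay as $t\to\infty$ (coming from $(\sinh 2t)^{-d/2}$ and $\coth 2t$), together with a mean-value estimate in $z$ controlled by $t^{-1/2}$ times the same Gaussian. Plugging these into the $L^2_t$ norm and integrating in $t$ gives exactly the kind of bound familiar from the Euclidean case, and the $z'$-integral converges because of the Gaussian in $(\rho-\rho')^2/t$ and in $|x-x'|^2$.

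Next I would obtain the lower bound $\|f\|_{L^p} \lesssim \|g_N(f)\|_{L^p}$ by the standard polarization/duality trick: using the $L^2$ identity of Lemma \ref{prop:l2gn}, for $f \in L^p$ and $h \in L^{p'}$ one writes
\[
\langle f, h\rangle = c_N \int_{\R^{d+1}}\int_0^\infty \pa_t^N e^{-t\AH}f(z)\,\overline{\pa_t^N e^{-t\AH}h(z)}\, t^{2N-1}\,\dd t\,\dd z
\]
(valid first for nice $f,h$ by Lemma \ref{prop:l2gn} and the semigroup property, after possibly splitting $2N$ into $N+N$ and using self-adjointness of $e^{-t\AH}$), then applies Cauchy--Schwarz in $t$ followed by Hölder in $z$ to get $|\langle f,h\rangle| \lesssim \|g_N(f)\|_{L^p}\|g_N(h)\|_{L^{p'}} \lesssim \|g_N(f)\|_{L^p}\|h\|_{L^{p'}}$, the last step being the already-proved upper bound for the exponent $p'$. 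Taking the supremum over $\|h\|_{L^{p'}}\le 1$ yields the claim. One must be slightly careful that $e^{-t\AH}f \to 0$ appropriately and that there are no constant-in-$t$ pieces to worry about; since $\AH \ge d > 0$ has no zero eigenvalue this is automatic, and the decay $\|e^{-t\AH}f\|_{L^2}\to 0$ as $t\to\infty$ takes care of boundary terms in any integration by parts in $t$.

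The main obstacle I anticipate is the kernel estimate step: verifying that $t^{N-1/2}\pa_t^N K(t,z,z')$ satisfies the Hörmander regularity condition uniformly in $t$, i.e. controlling the $t$-derivatives of the Mehler kernel and the $z$-differences of $\mathcal{K}$ with constants that survive integration in both $t\in(0,\infty)$ and $z'\in\R^{d+1}$. The delicate points are (i) the behavior near $t=0$, where $\coth 2t \sim 1/(2t)$ and one effectively recovers the Euclidean heat kernel in $d+1$ variables, so the singular-integral nature must be tracked precisely; and (ii) the behavior as $t\to\infty$, where the harmonic-oscillator part contributes the favorable factor $(\sinh 2t)^{-d/2}e^{-(\tanh t)|x+x'|^2/4}$ but the $\rho$-direction only has the polynomially-decaying Gaussian $t^{-1/2}e^{-(\rho-\rho')^2/4t}$, so one needs the $t^{2N-1}$ weight and the differentiations to tame the large-$t$ tail. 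Once one packages the bound as $|\pa_t^\ell\nabla_z^j K(t,z,z')| \lesssim t^{-\ell - j/2}(1+t)^{-?}\,\mathfrak{K}(t,z,z')$ with $\mathfrak{K}$ a genuine sub-probability kernel, the remaining Hörmander computation is routine and mirrors \cite{Stein, Grafakos249}.
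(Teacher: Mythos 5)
Your proposal follows essentially the same route as the paper: view $g_N$ as a vector-valued Calder\'on--Zygmund operator with kernel $\pa_t^N K(t,z,z')$ taking values in $L^2(\R^+, t^{2N-1}\dd t)$, establish Gaussian-type size and gradient bounds $|\pa_t^N K(t,z,z')|\lesssim t^{-\frac{d+1}{2}-N}e^{-|z-z'|^2/(16t)}$ (and the corresponding $z$-derivative bound with an extra $t^{-1/2}$) from the explicit Mehler kernel, conclude $L^p$-boundedness for $1<p<\infty$ via CZ theory, and obtain the reverse inequality by polarizing the $L^2$ identity of Lemma~\ref{prop:l2gn} and dualizing against the already-proved upper bound at exponent $p'$. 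The paper fills in the kernel estimates with a detailed Fa\`a di Bruno / polylogarithm computation, but the strategy and the target bounds you describe match the paper's argument.
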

%%%%%%%%%%%%%%% moved 1
\begin{proof}
%For the first inequality of \eqref{equ:redugoal}, we  will use the vector-valued singular integral operator theory to prove the  $L^p$-boundedness of the operator $g_N$
The fact that $\|g_N(f)\|_{L^2(\mathbb R^{d+1})} = C\|f\|_{L^2(\mathbb R^{d+1})}$ is given by  Lemma \ref{prop:l2gn}.  For general case $p\in(1,\infty)$, we will view
$g_N$ as a singular integral with a kernel taking values in the Hilbert space $\mathcal{H}_2:=L^2(\R^+;t^{2N-1}dt)$ by the auxillary function
\begin{equation*}
  \tilde{g}_N(f)(t,z)=
  \int_{\R^{d+1}}\frac{\pa^N K(t, z, z') }{\pa t^N} f(z')\dd  z',
\end{equation*}
where $K(t, z, z')$ is the kernel  \eqref{kernel of semigroup}.
By definition, we have
\begin{align*}
  \|\tilde{g}_N(f)(\cdot,z)\|_{\mathcal{H}_2}^2
 & =g_N(f)(z)^2,\\
\big\|\|\tilde{g}_N(f)(\cdot, z)\|_{\mathcal{H}_2}  \big\|_{L^p(\R^{d+1})} &=  \|g_N(f)\|_{L^p(\R^{d+1})}.
\end{align*}
The kernel of $\tilde{g}_N(f)$ is
$$G_N(t,z, z')=\frac{\pa^N K(t, z, z')}{\pa t^N} . $$
We claim the following facts hold:
\begin{gather}
 |G_N(t,z, z')|\lesssim t^{-\frac{d+1}{2}-N} e^{-\frac{1}{16t}|z-z'|^2}, \label{claim 1}\\
    |\partial_z G_N(t,z, z')|+ |\partial_{z'} G_N(t,z, z')|\lesssim t^{-\frac{d+2}{2}-N} e^{-\frac{1}{16t}|z-z'|^2}\label{claim 2}.
\end{gather}

Now we show the estimates \eqref{claim 1} and \eqref{claim 2}.  For $N=0$, the basic estimates that
 \begin{align*}
    2\coth2t-\tanh t >\coth 2t>&\tfrac{1}{2t},\\
    \tanh t>t,\;
    \sinh 2t\geq&  t
 \end{align*}
imply the upper bound
\begin{align*}
   | K(t, z, z')|\leq C t^{-\frac{d+1}{2}} e^{-\frac{1}{8 t}|z-z'|^2} e^{-\frac{t}{4 }|x+x'|^2} .
\end{align*}
Let $N\geq 1$.
By using the high order derivative formula
\begin{align*}
    \frac{\dd ^N \sinh t }{\dd t^N} &=-i^{N+1} \sin \Big(it +\frac{\pi N}{2}\Big)
\end{align*}
and the \FDB{}, we get
\begin{align*}
    \frac{\dd ^N (\sinh t)^{-d/2}}{\dd t^N}=\sum C_{N, m_1, \dots m_N} (\sinh t)^{-d/2-(m_1+\dots +m_N)} \prod_{j=1}^N \Big(\frac{\dd ^{j} \sinh t }{\dd t^{j}}\Big)^{m_j},
\end{align*}
where the sum is over all $m_i\in\mathbb Z_{\ge 0}$ such that $m_1+2m_2+\dots+ Nm_N=N$. As
\begin{align*}
   \Big|\sin \Big(it +\frac{\pi N}{2}\Big)\Big|
   \lesssim
   \begin{cases}
 1, & 0<t<1,\\
 e^t, & t>1,
   \end{cases}
\end{align*}
it follows that
%the derivatives of  $(\sinh 2t)^{-d/2}$ are controlled by
\begin{align}\label{bound for derivatives of sinh}
   \left| \frac{\dd ^N (\sinh 2t)^{-d/2}}{\dd t^N}\right|\lesssim t^{-d/2-N}.
\end{align}
To estimate the derivatives for $B(t,z, z')$, we use the following  formulas:
\begin{align*}
  \frac{\dd ^N \coth t }{\dd t^N} &=(-1)^{N} 2^{N+1} \operatorname{Li}_{-N}(e^{-2t}), \\
 \frac{\dd ^N \tanh t }{\dd t^N} &= - 2^{N+1} \operatorname{Li}_{-N}(-e^{2t}),
\end{align*}
where $\operatorname{Li}_{-N}$ is the polylogarithm in \cite{Lewin1981}.
Hence, we have
\begin{align}\label{derivative of B}
   \frac{\partial^N B(t, z, z')}{\partial t^N}= & \frac{1}{4}\Big[2^{N+1}(-1)^{N} 2^{N+1} \operatorname{Li}_{-N}(e^{-4t})- 2^{N+1} \operatorname{Li}_{-N}(-e^{2t})\Big] |x-x'|^2 \nonumber\\
   &- 2^{N-1} \operatorname{Li}_{-N}(-e^{2t})
 |x+x'|^2+C_N\frac{(\rho-\rho')^2}{t^{N+1}}.
\end{align}
Since
%\begin{align*}
$    \lvert\operatorname{Li}_{-N}(s)|\lesssim 1 $ for $0<s<1/2,$
%\end{align*} 
we have
$
   \lvert\operatorname{Li}_{-N}(e^{-t})|\lesssim 1\ \text{when }  t>1.
$
By $(7.187)$ and $(7.191)$ in \cite{Lewin1981},  we also have
$
    \lvert\operatorname{Li}_{-N}(-e^{t})|\lesssim 1\ \text{when }   t>1.
$

When $0<t<1$, by the Laurent expansions of $\tanh t$ and $\coth t$ , we have
 \begin{align*}
     \Big| \frac{\dd ^N \coth t }{\dd t^N}\Big|\lesssim  t^{-(N+1)}, \quad  \Big| \frac{\dd ^N \tanh t }{\dd t^N}\Big|\lesssim  1.
 \end{align*}
From this and \eqref{derivative of B}, we have for $N\geq 1$ that 
\begin{align}\label{bound for derivatives of B}
 \Big | \frac{\partial^N B(t, z, z')}{\partial t^N}  \Big | \lesssim t^{-(N+1)} |z-z'|^2+  |x+x'| ^2.
\end{align}
Direct computation gives  the following upper bound for the derivatives in  $z$ and $z'$,
\begin{align}\label{derivative in z}
 \Big | \frac{\partial^{N+1} B(t, z, z')}{\partial t^N \partial z}  \Big |+  \Big | \frac{\partial^{N+1} B(t, z, z')}{\partial t^N \partial z'}  \Big | \lesssim t^{-(N+1)} |z-z'|+|x+x'|.
\end{align}
Therefore, we obtain \begin{align*}
    &\quad \frac{\pa^NK(t, z, z')}{\pa t^N} \\
    &= \sum_{N_1+N_2=N} C_{N, N_1} \frac{\dd ^{N_1}( t^{-1/2})}{\dd t^{N_1}}\frac{\dd ^{N_2} (\sinh 2t)^{-d/2}}{\dd t^{N_2}}  e^{- B(t,z, z')}\\
    &\quad + \sum_{\substack{N_1+N_2+N_3=N\\ N_3\geq 1}} C_{N, N_1, N_2 } \frac{\dd ^{N_1}( t^{-1/2})}{\dd t^{N_1}}\frac{\dd ^{N_2} (\sinh 2t)^{-d/2}}{\dd t^{N_2}} e^{- B(t,z, z')} \frac{\partial^{N_3} B(t, z, z')}{\partial t^{N_3}}.
\end{align*}
Using the upper bound of \eqref{bound for derivatives of sinh}, \eqref{bound for derivatives of B}, and $\lambda^N e^{-\lambda} \lesssim 1$, we have
\begin{align*}
    &\quad \Big|\frac{\pa^N}{\pa t^N} K(t, z, z') \Big|\\
    &\lesssim \sum_{N_1+N_2=N}t^{-1/2-N_1-\frac{d}{2}-N_2}e^{-\frac{1}{8 t}|z-z'|^2-\frac{t}{4}|x+x'|^2} \\
    &\quad +\sum_{\substack{N_1+N_2+N_3=N\\ N_3\geq 1}} t^{-1/2-N_1-\frac{d}{2}-N_2} \big( t^{-N_3-1}|z-z'|^2+|x+x'| ^2 \big )e^{-\frac{1}{8 t}|z-z'|^2-\frac{t}{4}|x+x'|^2} \\
   &\lesssim t^{-\frac{d+1}{2}-N} e^{-\frac{1}{16t}|z-z'|^2}+\sum_{N_3\geq 1}t^{-\frac{d+1}{2}+N_3-N-1} e^{-\frac{1}{16t}|z-z'|^2}\\
  & \lesssim t^{-\frac{d+1}{2}-N} e^{-\frac{1}{16t}|z-z'|^2},
\end{align*}
which is \eqref{claim 1}. By \eqref{derivative in z}, the similar argument gives \eqref{claim 2}.

The estimates \eqref{claim 1} and \eqref{claim 2} would imply that $G_N$ is a \CZ kernel {with value in $\mathcal{H}_2$}, and hence we have $\|g_N(f)\|_{L^p(\mathbb R^{d+1})} \le C_2 \|f\|_{L^p(\mathbb R^{d+1})}$.  The reverse inequality follows from the boundedness of $g_N$,  duality argument and Lemma \ref{prop:l2gn}. In fact, by integrating \eqref{claim 1} in $t$,
\begin{align*}
  \|G_N(\cdot, z, z')\|_{\mathcal{H}_2}^2&\lesssim\int_0^1 t^{-(d+1)-2N}e^{-\frac{1}{8t}|z-z'|^2} \dd t+\int_1^\infty t^{-(d+1)-2N}e^{-\frac{1}{8t}|z-z'|^2}\dd t \\
  &\lesssim  |z'-z|^{-2(d+1)}+ e^{-|z-z'|^2}\\
  &\lesssim |z'-z|^{-2(d+1)}.
  \end{align*}
  Similarly, by \eqref{claim 2}, we have
  \begin{align*}
  \|\pa_ z G_N(\cdot, z, z')\|_{\mathcal{H}_2},   \|\pa_{z'} G_N(\cdot, z, z')\|_{\mathcal{H}_2} \lesssim   |z'-z|^{-(d+2)}.
\end{align*}
That is, $G_N(t, z, z')$ is a \CZ kernel with value in $\mathcal{H}_2$, and hence for all $p\in(1,\infty)$,
\begin{equation*}
  \|g_N(f)\|_{L^p(\R^{d+1})}=\big\|\|\tilde{g}_N(f)(\cdot, z)\|_{\mathcal{H}_2}
  \big\|_{L^p(\R^{d+1})}\leq C\|f\|_{L^p(\R^{d+1})}.\end{equation*}
 As mentioned above, by the duality argument, we can obtain the reverse inequality, and complete the proof.
\end{proof}

Next, we  define the $g_N^\ast$-function by
\begin{align}\label{equ:litpaastgndef}
  g_N^\ast(f)(z)^2  ={\int_0^\infty \!\!\int_{\R^{d+1}}
 \!\!\!\! t^{1-\frac{d+1}{2}}(1+t^{-1}|z'-z|^2)^{-N}
  |\pa_t e^{-t\AH}f(z')|^2\dd  z'\dd t}.
\end{align}
By \eqref{H decomposition}, the Schwartz kernel of the  operator $e^{-t\AH}T_m$ is
\begin{align}\label{equ:ettmkern}
  M_t(z,z')&= \sum_{\mu\in\N^d}\int_{\R}e^{i\tau(\rho-\rho')}
e^{-t(\tau^2+2|\mu|+d)}m(\tau,|\mu|)\dd \tau \Phi_{\mu}(x')\Phi_{\mu}(x)\nonumber\\
&=\sum_{k=0}^{\infty}\int_{\R}e^{i\tau(\rho-\rho')}
e^{-t(\tau^2+2k+d)}m(\tau,k)\Phi_k(x,x')\dd \tau.
\end{align}

The following result is the key estimate to show the second inequality in \eqref{ests}, that is where we use the decay assumption on $m$.

\begin{lemma}[Pointwise estimate]\label{prop:pointcon}
Under the assumption \eqref{equ:assummulf}, for any $z\in\R^{d+1}$, the following pointwise estimate
\begin{equation}
  g_{N+1}(T_mf)(z)\leq C g_N^\ast(f)(z) \label{equ:lem3.3}
\end{equation}
holds for all $0\leq N\leq \lfloor \frac{d+1}2\rfloor+1$.
\end{lemma}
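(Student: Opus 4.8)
The plan is to write $g_{N+1}(T_mf)(z)^2$ out explicitly using the kernel $M_t(z,z')$ of $e^{-t\AH}T_m$ from \eqref{equ:ettmkern}, and to bound it pointwise by a convolution-type expression that is controlled by $g_N^\ast(f)(z)^2$. Concretely, $\pa_t^{N+1}e^{-t\AH}T_mf(z) = \int_{\R^{d+1}} \pa_t^{N}\big(\text{kernel of }\pa_t e^{-t\AH}T_m\big)(z,w)\,\cdot\, (\text{something in }f)$; more usefully, I would factor $e^{-t\AH}T_m = e^{-\frac t2 \AH}\circ\big(e^{-\frac t2\AH}T_m\big)$ so that $\pa_t^{N+1}e^{-t\AH}T_mf = \sum_{a+b=N+1}c_{a,b}\,(\pa_s^a e^{-s\AH})\big|_{s=t/2}\,(\pa_s^b e^{-s\AH}T_m f)\big|_{s=t/2}$, and the first factor is an honest heat-type operator whose derivative kernel satisfies the Gaussian bound \eqref{claim 1}. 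This reduces matters to controlling $\pa_t^b(e^{-t\AH}T_m)$ for $0\le b\le N+1$, i.e. to differentiating the multiplier factor. The desired output is an $L^2_t(t^{2N+1}\dd t)$-bound that, after a Cauchy--Schwarz / Schur-type step, folds into the $(1+t^{-1}|z'-z|^2)^{-N}$-weighted square integral defining $g_N^\ast(f)$.

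\textbf{Key steps.} First, I would differentiate \eqref{equ:ettmkern} in $t$: each $\pa_t$ either lands on $e^{-t(\tau^2+2k+d)}$, producing a factor $-(\tau^2+2k+d)$, or (after integration by parts in $\tau$, or directly via the finite-difference structure in $k$) lands on $m(\tau,k)$. Here is where the assumption \eqref{equ:assummulf} enters: the bounds $|\pa_\tau^N m|\lesssim (\tau^2+2k+d)^{-N/2}$ and $|\triangle_k^N m|\lesssim (\tau^2+2k+d)^{-N}$ are exactly what is needed so that, after transferring $N$ derivatives/differences onto $m$ and integrating by parts in $\tau$ (and summing by parts in $k$, using the Mehler formula \eqref{Mehler's formula} to express $\triangle_k^j\Phi_k(x,x')$), the singular factors $(\tau^2+2k+d)^{j}$ coming from the exponential are cancelled up to the top order $N$, leaving a residual $e^{-t(\tau^2+2k+d)}$ that one integrates in $\tau$ and sums in $k$ using \eqref{equ:phisumest}. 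Second, I would assemble the resulting kernel bound for $\pa_t^{N+1}M_t$-type object: something of the shape $|{\cdot}|\lesssim t^{-(d+1)/2 - (N+1)}(1+t^{-1}|x+x'|^2)^{-\text{stuff}}$ times a Gaussian in $\rho-\rho'$, i.e. essentially the same as the heat kernel derivative bound \eqref{claim 1} but with the polynomially decaying $(1+t^{-1}|z-z'|^2)^{-N}$ replacing the Gaussian — this mismatch (polynomial vs Gaussian decay) is precisely why one needs the $g^\ast_N$-function rather than just $g_N$ on the right-hand side. Third, with $G_{N+1}(t,z,w)$ denoting the kernel of $\pa_t^{N+1}e^{-t\AH}$ satisfying \eqref{claim 1}, write $\pa_t^{N+1}e^{-t\AH}T_m f(z) = \int G_{a}(t/2,z,w)\,(\pa_s^{b}e^{-s\AH}T_mf)(w)\big|_{s=t/2}\dd w$ and apply Cauchy--Schwarz in $w$ against the Gaussian weight from $G_a$; the Gaussian $e^{-c|z-w|^2/t}$ dominates $(1+t^{-1}|z-w|^2)^{-N}$ up to a constant, so after squaring, multiplying by $t^{2N+1}$ and integrating in $t$ one recognizes the right-hand side as $\lesssim g_N^\ast(e^{-\frac{\cdot}{2}\AH}T_m f$-ish$)$; the final bookkeeping identifies this with $g_N^\ast(f)(z)$ because $e^{-s\AH}T_m$ is again of the form $T_{\tilde m}$ with $\tilde m(\tau,k)=e^{-s(\tau^2+2k+d)}m(\tau,k)$ satisfying \eqref{equ:assummulf} uniformly in $s$.

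\textbf{Main obstacle.} The technical heart — and the step I expect to be most delicate — is the simultaneous ``integration by parts in $\tau$ and summation by parts in $k$'' needed to move all $N+1$ $t$-derivatives off the exponential $e^{-t(\tau^2+2k+d)}$ and onto $m$, while keeping track of the kernel $\Phi_k(x,x')$. One $t$-derivative equals $-(\tau^2+2k+d)$ acting on $e^{-t(\cdot)}$; the factor $\tau^2$ is handled by two integrations by parts in $\tau$ (each IBP moving one $\pa_\tau$ onto $m$ or onto $\Phi_k$, the latter being harmless since $\Phi_k$ is $\tau$-independent — so both land on $m$), and the factor $2k+d$ is handled by Abel summation in $k$, converting $(2k+d)e^{-t(\cdot)}$ into forward differences $\triangle_k e^{-t(\cdot)}$ paired against $\triangle_k$-differences of $m(\tau,k)\Phi_k(x,x')$, using the Leibniz rule for finite differences $\triangle_k^N(fg) = \sum\binom{N}{j}\triangle_k^j f\,\triangle_k^{N-j}g(\,\cdot+j\,)$. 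One must then bound $\triangle_k^j\Phi_k(x,x')$ and the mixed sums $\sum_k |\triangle_k^j e^{-t(2k+d)}|\cdot(\text{weights})$, for which the Mehler formula and the bound \eqref{equ:phisumest} are the right tools, but the combinatorics of distributing the $N+1$ derivatives among $\{\pa_\tau^2\text{-pairs},\ \triangle_k\text{-steps}\}$ and verifying that the worst term still obeys \eqref{claim 1} with the weight $(1+t^{-1}|z-z'|^2)^{-N}$ is where the constraint $0\le N\le\lfloor\frac{d+1}{2}\rfloor+1$ becomes visible (it is exactly the number of derivatives/differences one is allowed to use, matching the hypothesis on $m$). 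Everything else — the Cauchy--Schwarz against the Gaussian and the final reassembly into $g_N^\ast(f)$ — is routine once this kernel estimate is in hand.
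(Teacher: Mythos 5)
Your overall instinct — factor the semigroup, push derivatives onto the multiplier-twisted kernel, estimate that kernel via $\tau$-integration by parts and $k$-summation by parts, then Cauchy--Schwarz — is in the right spirit, and the ``Main obstacle'' paragraph does identify what is genuinely the technical heart (the paper isolates it as a separate lemma, \eqref{conjecture}, proved via Thangavelu's identity relating $(x-x')^\beta\Phi_k(x,x')$ to $\triangle_k$-differences and creation operators acting on $\Phi_k$). However, your factorization is oriented the wrong way, and this creates a gap at the reassembly step that I do not think your argument closes.

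You write $e^{-t\AH}T_m = e^{-\frac{t}{2}\AH}\circ\bigl(e^{-\frac{t}{2}\AH}T_m\bigr)$ and Leibniz-expand in a single variable $t$, which puts $T_m$ in the factor acting directly on $f$. After Cauchy--Schwarz against the Gaussian weight of the plain heat kernel, the inner integral therefore contains $\bigl|\pa_s^{b}e^{-s\AH}T_m f(w)\bigr|^2$, not $|\pa_t e^{-t\AH}f(w)|^2$. You acknowledge this by writing $g_N^\ast(e^{-\cdot/2}\AH T_m f\text{-ish})$ and then try to identify it with $g_N^\ast(f)$ on the grounds that $e^{-s\AH}T_m = T_{\tilde m}$ is again a multiplier satisfying \eqref{equ:assummulf}. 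But that observation only gives $L^p$ information, not a pointwise bound of $g_N^\ast(T_{\tilde m}f)$ by $g_N^\ast(f)$; asserting such a pointwise bound is essentially what the lemma itself is claiming, so the step is circular. In addition, your Leibniz sum produces a $b=N+1$ term requiring $N+1$ derivatives/differences of $m$, which exceeds the regularity $\leq \lfloor\frac{d+1}{2}\rfloor+1$ assumed in \eqref{equ:assummulf}.

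The paper avoids both problems by putting $T_m$ with the kernel factor, not the $f$-factor, and by using two parameters rather than Leibniz. Concretely, one writes $\pa_t^N\pa_s e^{-(t+s)\AH}T_m f = \pa_t^N(e^{-t\AH}T_m)\bigl(\pa_s e^{-s\AH}f\bigr)$ and then sets $s=t$. This yields \emph{exactly} the split: $N$ derivatives on the kernel $M_t$, and a single derivative $\pa_t e^{-t\AH}f$, which is precisely the object appearing in the definition \eqref{equ:litpaastgndef} of $g_N^\ast$. Cauchy--Schwarz is then applied not against a Gaussian but by splitting the weight $(1+t^{-1}|z-z'|^2)^{N/2}\cdot(1+t^{-1}|z-z'|^2)^{-N/2}$ between the two factors, and the whole lemma reduces cleanly to the single kernel estimate
\begin{equation*}
\int_{\R^{d+1}}\bigl(1+t^{-1}|z-z'|^2\bigr)^N\bigl|\pa_t^N M_t(z,z')\bigr|^2\dd z' \lesssim t^{-\frac{d+1}{2}-2N},
\end{equation*}
which is where the assumption \eqref{equ:assummulf}, the $\tau$-integration by parts, the $k$-summation by parts, the Mehler formula, and \eqref{equ:phisumest} all enter. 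If you re-orient your factorization (keep $T_m$ with the $N$-derivative kernel factor and leave $\pa_t e^{-t\AH}f$ clean), introduce the two-parameter trick in place of Leibniz, and Cauchy--Schwarz against the polynomial weight rather than the Gaussian, your sketch aligns with the paper's argument.
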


%\subsection{Proof of Lemma \ref{prop:pointcon}}\label{subsec:pointcon}
%In this subsection, we aim to prove the pointwise control, i.e. Lemma \ref{prop:pointcon} by exploiting the oscillatory property.
%To prove Lemma \ref{prop:pointcon}, recalling the definition of $g_{N}f$ and $g_N^\ast f$,
%we are reduced to show
%\begin{align*}
%   & \int_0^\infty\big|\pa_t^{N+1} e^{-t\AH}(T_mf)(z)\big|^2t^{2N+1}\dd t\\\nonumber
%   &\lesssim  \int_0^\infty \int_{\R^{d+1}}
%  t^{1-\frac{d+1}{2}}(1+t^{-1}|z'-z|^2)^{-N}
%  |\pa_t e^{-t\AH}f(z)|^2\dd z'\dd t.
%\end{align*}
\begin{proof}Observe that
\begin{align}\label{equ:patMkern}
 \pa_t^{N} \pa_s e^{-(t+s)\AH}(T_mf)(z)&= \pa_t^{N}(e^{-t\AH}T_m)(\pa_s e^{-s\AH}f)(z).
\end{align}
In particular, by choosing $s=t$, we have
\begin{align*}
  \pa_t^{N+1} e^{-2t\AH}(T_mf)(z)&= \pa_t^{N}(e^{-t\AH}T_m)(\pa_t e^{-t \AH}f)(z).
\end{align*}
In order to show \eqref{equ:lem3.3}, it suffices to show for  $t> 0$  and each $z\in\mathbb R^{d+1}$ that 
\begin{multline}\label{equ:lem1.6redu2}
    \big|\pa_t^{N+1} {e^{-2t\AH}}(T_mf)(z)\big|^2\\
   \lesssim   t^{-\frac{d+1}{2}-2N}\int_{\R^{d+1}}
(1+t^{-1}|z'-z|^2)^{-N}
  |\pa_t e^{-t\AH}f(z)|^2\dd z'.
\end{multline}
%Noting that
%\begin{align*}
%&e^{-t\AH}T_mf(z)=\sum_{k=0}^{\infty}\int_{\R}e^{i\tau\rho}e^{-t(\tau^2+2k+d)}
%m(\tau,k)P_k(\mathcal{F}_\rho f)(\tau,x)\dd \tau\\
%&=\sum_{\mu\in\N^d}\int_{\R^{d+1}}\int_{\R}e^{i\tau(\rho-\rho')}
%e^{-t(\tau^2+2|\mu|+d)}m(\tau,|\mu|) f(z')\Phi_{\mu}(x')\dd \tau\dd z' \Phi_{\mu}(x)\\\nonumber
%&=\sum_{k=0}^{\infty}\int_{\R^{d+1}}\int_{\R}e^{i\tau(\rho-\rho')}
%e^{-t(\tau^2+2k+d)}m(\tau,k)f(z')\Phi_k(x,x')\dd \tau\dd z',
%\end{align*}
%we deduce that the kernel of the operator $e^{-t\AH}T_m$ is

%where $\Phi_k(x,x')$ is defined as follows
%\begin{equation*}
%  \Phi_k(x,x')=\sum_{|\mu|=k}\Phi_{\mu}(x')\Phi_{\mu}(x).
%\end{equation*}
This estimate \eqref{equ:lem1.6redu2} follows from the following claim whose proof we postpone in next lemma:
\begin{equation}\label{conjecture}
  \int_{\R^{d+1}}(1+t^{-1}|z'-z|^2)^{N} |\pa_t^{N}M_t(z,z')|^2\dd z'
  \lesssim t^{-\frac{d+1}{2}-2N}.
\end{equation}
% then the pointwise estimate \eqref{equ:lem1.6redu2} is valid.
By \eqref{equ:patMkern}, \eqref{conjecture} and the Cauchy--Schwarz inequality, we obtain
\begin{align*}
 \MoveEqLeft |\pa_t^{N+1} e^{-2t\AH}(T_mf)(z)|^2 =   \big|\pa_t^{N}(e^{-t\AH}T_m)(\pa_te^{-t\AH}f)(z)|^2\\
 &=\Big|\int_{\R^{d+1}}\pa_t^{N}M_t(z,z')(\pa_te^{-t\AH}f)(z')\dd z'\Big|^2\\
 &\lesssim \int_{\R^{d+1}}(1+t^{-1}|z'-z|^2)^{N} |\pa_t^{N}M_t(z,z')|^2\dd z'\\
 &\quad \times \int_{\R^{d+1}}(1+t^{-1}|z'-z|^2)^{-N}|\pa_te^{-t\AH}f(z')|^2\dd z'\\
 &\lesssim   t^{-\frac{d+1}{2}-2N}\int_{\R^{d+1}}
(1+t^{-1}|z'-z|^2)^{-N}
|\pa_t e^{-t\AH}f(z)|^2\dd z',
\end{align*}
which gives \eqref{equ:lem1.6redu2} and hence completes the  proof.
\end{proof}

Now we turn to show the claim \eqref{conjecture}, which follows from the following lemma.

\begin{lemma}[Estimates for the kernel $M_t$]\label{lem:kernmest}
Under the assumption \eqref{equ:assummulf}, for all $0\leq N \leq \lfloor \frac{d+1}2\rfloor+1$, we have
\begin{gather}\label{equ:pointmtes}
 |\pa_t^{N}M_t(z,z')|\lesssim_N t^{-\frac{d+1}{2}-N},\\\label{equ:intestmtes}
\int_{\R^{d+1}}|z'-z|^{2N} |\pa_t^{N}M_t(z,z')|^2\dd z'
\lesssim_Nt^{-\frac{d+1}{2}-N}.
\end{gather}
\end{lemma}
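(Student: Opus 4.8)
\textbf{Proof plan for Lemma~\ref{lem:kernmest}.}
The plan is to analyze the kernel $M_t(z,z')$ in \eqref{equ:ettmkern} by freezing the Hermite variable: write $M_t(z,z') = \sum_{k=0}^\infty \Phi_k(x,x') \Psi_t^k(\rho-\rho')$, where $\Psi_t^k(\sigma) = \int_\R e^{i\tau\sigma} e^{-t(\tau^2+2k+d)} m(\tau,k)\,\dd\tau$ is, for each fixed $k$, the $\rho$-direction inverse Fourier transform of $\tau\mapsto e^{-t(\tau^2+2k+d)}m(\tau,k)$. The first step is to bound $\pa_t^N$ of the full product via the Leibniz rule, distributing derivatives among the factor $\Phi_k(x,x')$ (which carries no $t$) — so in fact all $t$-derivatives land on $\Psi_t^k$ — and then to estimate $\pa_t^N \Psi_t^k(\sigma)$. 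Differentiating under the integral in $t$ produces a sum over $N_1+N_2=N$ of terms $\int_\R e^{i\tau\sigma}(\tau^2+2k+d)^{N_1} e^{-t(\tau^2+2k+d)} \pa_t^{N_2}[\text{something}]$; but since $m$ does not depend on $t$, actually $\pa_t^N\big(e^{-t(\tau^2+2k+d)}m(\tau,k)\big) = (-(\tau^2+2k+d))^N e^{-t(\tau^2+2k+d)} m(\tau,k)$, so the $t$-derivatives are clean. The decay hypothesis on $m$ in $\tau$ from \eqref{equ:assummulf} is what lets us also integrate by parts in $\tau$ to gain decay in $\sigma=\rho-\rho'$, at the cost of $\pa_\tau$ hitting $m$ and the Gaussian factor.

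For the pointwise bound \eqref{equ:pointmtes}: I would estimate $|\pa_t^N\Psi_t^k(\sigma)|$ crudely by pulling absolute values inside, giving $\int_\R (\tau^2+2k+d)^N e^{-t(\tau^2+2k+d)}\,\dd\tau$ (the $m$ factor being bounded by $\leq C(\tau^2+2k+d)^0$ trivially from the $N=0$ case of \eqref{equ:assummulf}). Using $\lambda^N e^{-t\lambda/2}\lesssim t^{-N}$ with $\lambda = \tau^2+2k+d$, and $e^{-t(\tau^2+2k+d)/2}\le e^{-t\tau^2/2}e^{-tk}$, this is $\lesssim t^{-N}\int_\R e^{-t\tau^2/2}\,\dd\tau\, e^{-tk} \lesssim t^{-N-1/2}e^{-tk}$. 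Then summing the product against $\Phi_k(x,x')^2$ — here I would use Cauchy--Schwarz in $k$ against $\sum_k e^{-tk}\Phi_k(x,x')$-type sums, or more directly bound $|\Phi_k(x,x')| \le (\sum_{|\mu|=k}\Phi_\mu(x)^2)^{1/2}(\sum_{|\mu|=k}\Phi_\mu(x')^2)^{1/2}$ and invoke \eqref{equ:phisumest} after inserting a splitting $e^{-tk} = e^{-tk/2}e^{-tk/2}$ — yields $\sum_k t^{-N-1/2} e^{-tk}|\Phi_k(x,x')| \lesssim t^{-N-1/2} \cdot t^{-d/2} = t^{-\frac{d+1}{2}-N}$, which is exactly \eqref{equ:pointmtes}. (One must be slightly careful that the $\rho$-integral producing $\Psi_t^k$ does not itself grow, but $|\Psi_t^k(\sigma)|$ is bounded uniformly in $\sigma$ by the same $L^1_\tau$ estimate, which suffices for the pointwise claim.)

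For the weighted $L^2$ bound \eqref{equ:intestmtes}, split the integral $\int_{\R^{d+1}} = \int_{\R^d_{x'}}\int_{\R_{\rho'}}$. For the $\rho'$-integral with weight $|\rho-\rho'|^{2N}$, use Plancherel in $\sigma=\rho-\rho'$: $\int_\R |\sigma|^{2N}|\pa_t^N\Psi_t^k(\sigma)|^2\,\dd\sigma \simeq \int_\R \big|\pa_\tau^N\big[(\tau^2+2k+d)^N e^{-t(\tau^2+2k+d)}m(\tau,k)\big]\big|^2\,\dd\tau$; expanding by Leibniz and using both halves of \eqref{equ:assummulf} together with $|\pa_\tau^j[(\tau^2+\cdots)^N e^{-t(\tau^2+\cdots)}]|\lesssim t^{j/2-?}(\tau^2+2k+d)^{N}e^{-t(\tau^2+2k+d)/2}\cdot(\text{powers})$, one gets a bound of the form $t^{-2N-1/2}e^{-2tk}$ after the $\tau$-integration (the exponents working out so that every $\tau$-derivative trades a factor $(\tau^2+2k+d)^{-1/2}$ from $m$ or $t^{1/2}$ from the Gaussian against the weight $|\sigma|$, keeping homogeneity balanced). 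For the $x'$-part, the weight $|x-x'|^{2N}$ times $\Phi_k(x,x')^2$ must be integrated in $x'$; here I would expand $|x-x'|^{2N}$, use the recursion \eqref{equ:ajphiest} (equivalently that multiplication by $x'_j$ maps the $k$-eigenspace into the $(k\pm1)$-eigenspaces with coefficients $O(\sqrt{k})$) so that $\int_{\R^d}|x-x'|^{2N}\Phi_k(x,x')\Phi_{k'}(x,x')\,\dd x'$ is controlled, and then sum in $k$ using \eqref{equ:intprop} / \eqref{equ:phisumest} with the exponential $e^{-2tk}$ providing the $t^{-d/2}$ gain — together with the factor $t^N$ from the $2N$ factors of $x'$ each contributing $\sqrt{k}\,e^{-tk}\lesssim t^{-1/2}$. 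Combining the $\rho'$- and $x'$- estimates and summing over $k$ gives $\lesssim t^{-2N-1/2}\cdot t^N \cdot t^{-d/2} = t^{-\frac{d+1}{2}-N}$, which is \eqref{equ:intestmtes}.

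The main obstacle I anticipate is the bookkeeping in the $x'$-integral of \eqref{equ:intestmtes}: controlling $\int_{\R^d}|x-x'|^{2N}\Phi_k(x,x')\Phi_{k'}(x,x')\,\dd x'$ uniformly and then resumming in $k, k'$ against the $\tau$-side exponential weight. One needs that multiplication by the polynomial $|x-x'|^{2N}$ only couples eigenspaces whose indices differ by at most $2N$, with operator-norm growth $O(k^N)$, and that the $t$-decay $e^{-2tk}$ from the heat factor (combined with $k^N e^{-tk}\lesssim t^{-N}$) exactly absorbs this growth; assembling this cleanly, rather than any single estimate, is the delicate part. The $\tau$-side is routine given \eqref{equ:assummulf}, essentially the one-dimensional Mikhlin computation with the extra parameter $2k+d$ behaving like a spectral shift.
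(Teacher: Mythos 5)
Your pointwise bound \eqref{equ:pointmtes} is correct and follows the paper's argument essentially verbatim: pull absolute values inside, use $|m|\lesssim 1$, $\lambda^N e^{-\lambda}\lesssim 1$, Cauchy--Schwarz, and the Mehler estimate \eqref{equ:phisumest}. The $\rho$-weight half of \eqref{equ:intestmtes} is also in the right spirit (Plancherel in $\rho'$ plus a Leibniz computation giving \eqref{Key estimate 1}-type control), though note the paper splits the weight as $|z-z'|^{2N}\lesssim |\rho-\rho'|^{2N}+\sum_{|\beta|=N}|(x-x')^\beta|^2$ and treats the two pieces separately, while your accounting applies both weights at once; the resulting exponents do not balance. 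The correct $\rho$-side estimate per $k$ is $t^{-N-1/2}e^{-t(2k+d)}$, not the $t^{-2N-1/2}e^{-2tk}$ you write.

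The genuine gap is in the $x'$-weighted part, exactly where you flag the ``main obstacle.'' Your mechanism --- expand $|x-x'|^{2N}$, use that multiplication by $x'_j$ shifts eigenspaces with $O(\sqrt k)$ coefficients, and absorb the resulting $O(k^N)$ operator-norm growth via $k^N e^{-tk}\lesssim t^{-N}$ --- is the wrong one and cannot work. First, arithmetically it produces a factor $t^{-N}$, not $t^{N}$, so your final count comes out as $t^{-2N-(d+1)/2}$, which overshoots the target by $t^{-N}$. Second, and more fundamentally, your plan never invokes the discrete half of \eqref{equ:assummulf}, the hypothesis $|\triangle_k^N m(\tau,k)|\lesssim (\tau^2+2k+d)^{-N}$. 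A proof that does not use this condition proves too much: for $m(\tau,k)=(-1)^k$ every $\tau$-derivative vanishes and $|m|\le 1$, but the finite differences do not decay and the conclusion of the lemma fails. The paper's resolution is the intertwining identity from Lemma~3.2.3 of \cite{Than93book},
\[
(x-x')^\beta\,\pa_t^N M_t(z,z')=\sum_{k\ge 0}\sum_{\gamma,\delta} C_{\gamma,\delta}\,\bigl[\triangle_k^{|\delta|}\Psi_k^N(\rho-\rho')\bigr]\,(A'-A)^\gamma\Phi_k(x,x'),\qquad 2\delta_j-\gamma_j=\beta_j,
\]
which is the discrete analogue of integration by parts: it trades multiplication by $(x-x')$ for a finite difference $\triangle_k$ on the coefficient side plus a ladder operator $(A'-A)$ on the kernel side. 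The finite-difference hypothesis then gives the key estimate \eqref{Key estimate 2}, $|\triangle_k^{|\delta|}\mathbf f(\tau,k)|\lesssim t^{-(N-|\delta|)}e^{-t(\tau^2+2k+d)}$, and the constraint $|\gamma|=2|\delta|-N$ makes the gain $t^{|\delta|}$ from the difference exactly cancel the loss $(2(k+1))^{|\gamma|/2}e^{-t(2k+d)/2}\lesssim t^{-|\gamma|/2}$ from the ladder operators. Without this identity and the finite-difference hypothesis, the sum over $k$ cannot be controlled.
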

\begin{proof}
We firstly prove the pointwise estimate \eqref{equ:pointmtes}. By \eqref{equ:phisumest},  \eqref{equ:ettmkern}, the Cauchy--Schwarz inequality, the $L^\infty$ bound of $m$,  and  the  fact that $\lambda^Ne^{-\lambda}\lesssim_N1$ for any $\lambda>0$, we have
\begin{align*}
 &|\pa_t^{N}M_t(z,z')|\\
 &=\Big|\sum_{\mu\in\N^d}\int_{\R}e^{i\tau(\rho-\rho')}(\tau^2+2|\mu|+d)^N
e^{-t(\tau^2+2|\mu|+d)}m(\tau,|\mu|)\dd \tau \Phi_{\mu}(x')\Phi_{\mu}(x)  \Big|\\
&\lesssim t^{-N}\sum_{\mu\in\N^d}\int_{\R}e^{-\frac{t}2(\tau^2+2|\mu|+d)}\dd \tau |\Phi_{\mu}(x')\Phi_{\mu}(x)|\\
&\lesssim  t^{-N}e^{-\frac{t}2d}\int_{\R}e^{-\frac{t}2\tau^2}\dd \tau\Big(\sum_{\mu\in\N^d} e^{-t|\mu|}\Phi_{\mu}(x')^2\Big)^\frac12
\Big(\sum_{\mu\in\N^d} e^{-t|\mu|}\Phi_{\mu}(x)^2\Big)^\frac12\\
&\lesssim  t^{-\frac{d+1}2-N},
\end{align*}
which gives \eqref{equ:pointmtes}.

Next, we  show \eqref{equ:intestmtes}. We firstly consider the case $N=0$, that is
\begin{equation}\label{equ:intesn0}
  \int_{\R^{d+1}} |M_t(z,z')|^2\dd z'
\lesssim t^{-\frac{d+1}{2}}.
\end{equation}
From \eqref{equ:ettmkern}, we know that
\begin{align*}
 M_t(z,z')&=\int_{\R}e^{-i\tau\rho'}\Big\{e^{i\tau\rho}\sum_{\mu\in\N^d}
e^{-t(\tau^2+2|\mu|+d)}m(\tau,|\mu|) \Phi_{\mu}(x')\Phi_{\mu}(x)\Big\}\dd \tau.
%\\
%&=\mathcal{F}_{\tau\mapsto\rho'}\Big\{e^{i\tau\rho}\sum_{\mu\in\N^d}
%e^{-t(\tau^2+2|\mu|+d)}m(\tau,|\mu|) \Phi_{\mu}(x')\Phi_{\mu}(x)\Big\}(\rho').
\end{align*}
Combining this with the Plancherel theorem in  $\rho'$, the $L^\infty$ bound of $m$,  \eqref{equ:phisumest} and  \eqref{equ:intprop}, we have
\begin{align}\nonumber
\MoveEqLeft\int_{\R^{d+1}} |M_t(z,z')|^2\dd z' = \int_{\R^{d+1}}\Big|\sum_{\mu\in\N^d}
e^{-t(\tau^2+2|\mu|+d)}m(\tau,|\mu|) \Phi_{\mu}(x')\Phi_{\mu}(x)\Big|^2\dd \tau\dd x'\\\nonumber
&\lesssim \int_{\R}e^{-2t\tau^2}\dd \tau \int_{\R^d}\sum_{\mu\in\N^d}
e^{-t(2|\mu|+d)}\Phi_{\mu}(x')^2\dd x' \sum_{\mu\in\N^d}
e^{-t(2|\mu|+d)}\Phi_{\mu}(x)^2\\
&\lesssim t^{-\frac12}(\sinh t)^{-d}e^{-td}t^{-\frac{d}{2}}\lesssim t^{-\frac{d+1}{2}}. \label{equ:mtn0est}
\end{align}
This implies \eqref{equ:intestmtes} when $N=0$.

For $N\geq1$, by the triangle inequality, we have
\begin{align}\notag
   & \int_{\R^{d+1}}|z'-z|^{2N} |\pa_t^{N}M_t(z,z')|^2\dd z'\\\nonumber
%   &\lesssim  \int_{\R^{d+1}}|\rho'-\rho|^{2N} \big|\pa_t^{N}M_t(z,z')\big|^2\dd z'+ \int_{\R^{d+1}}|x'-x|^{2N} \big|\pa_t^{N}M_t(z,z')\big|^2\dd z'\\\nonumber
   &\lesssim \int_{\R^{d+1}} |(\rho'-\rho)^N\pa_t^{N}M_t(z,z')|^2\dd z'+ \sum_{\beta\in\N^d:\;|\beta|=N}\int_{\R^{d+1}} |(x-x')^\beta\pa_t^{N}M_t(z,z')|^2\dd z'\\
   &\eqqcolon \mathrm{I}_N+\mathrm{II}_N. \label{equ:nger1inte}
\end{align}

For the first term,  by integration by parts, we have
\begin{multline*}
(\rho-\rho')^N\pa_t^N M_t(z,z')
%&=i^N\sum_{\mu\in\N^d}\int_{\R}\frac{\pa^N}{\pa\tau^N}\big(e^{i\tau(\rho-\rho')}\big)(\tau^2+2|\mu|+d)^N
%e^{-t(\tau^2+2|\mu|+d)}m(\tau,|\mu|)\dd \tau \Phi_{\mu}(x')\Phi_{\mu}(x)\\
%&=(-i)^N\sum_{\mu\in\N^d}\int_{\R}e^{i\tau(\rho-\rho')}\frac{\pa^N}{\pa\tau^N}\Big[(\tau^2+2|\mu|+d)^N
%e^{-t(\tau^2+2|\mu|+d)}m(\tau,|\mu|)\Big]\dd \tau \Phi_{\mu}(x')\Phi_{\mu}(x)\\
=\int_{\R}e^{-i\tau\rho'}\Big\{(-i)^Ne^{i\tau\rho}\\\sum_{\mu\in\N^d}\frac{\pa^N}{\pa\tau^N}\Big[(\tau^2+2|\mu|+d)^N
e^{-t(\tau^2+2|\mu|+d)}m(\tau,|\mu|)\Big]\Phi_{\mu}(x')\Phi_{\mu}(x)\Big\}\dd \tau.
%&\eqqcolon\mathcal{F}_{\tau\mapsto\rho'}\Big\{(-i)^Ne^{i\tau\rho}\sum_{\mu\in\N^d}\frac{\pa^N}{\pa\tau^N}\big[(\tau^2+2|\mu|+d)^N
%e^{-t(\tau^2+2|\mu|+d)}m(\tau,|\mu|)\big]\Phi_{\mu}(x')\Phi_{\mu}(x)\Big\}(\rho').
\end{multline*}
In the following, we  write $ \mathbf f(\tau,k) = (\tau^2+2k+d)^N
e^{-t(\tau^2+2k+d)}m(\tau,k)$ for brevity.
By the Plancherel theorem in $\rho'$, we have
\begin{align}\notag
 \mathrm{I}_N&=\int_{\R^{d+1}} \big|(\rho'-\rho)^N\pa_t^NM_t(z,z')\big|^2\dd z' \\
 &=\int_{\R^{d+1}}\!\Big|\!\!\sum_{\mu\in\N^d}\!\frac{\pa^N}{\pa\tau^N} \mathbf f(\tau,k)\Phi_{\mu}(x')\Phi_{\mu}(x)\Big|^2 \!\dd \tau\dd x'. \label{equ:i1estN} 
\end{align}

We claim the following estimate holds under the first assumption in \eqref{equ:assummulf},
\begin{equation}\label{Key estimate 1}
  \Big|\frac{\pa^N}{\pa\tau^N}  \mathbf f(\tau,|\mu|)\Big|\lesssim t^{-\frac{N}2} e^{-\frac{t}2(\tau^2+2|\mu|+d)}.
\end{equation}
In fact, notice that for $j\ge3$, we have \begin{align*}
 \frac{\pa }{\pa \tau} (\tau^2+2|\mu|+d)=2\tau, \ \frac{\pa^2 }{\pa \tau^2} (\tau^2+2|\mu|+d)=2, \ \text{and}\   \frac{\pa^j }{\pa \tau^j} (\tau^2+2|\mu|+d)=0.
\end{align*}
Then the
\FDB gives
\begin{align*}
  \Big|\frac{\pa^{N_1}}{\pa\tau^{N_1}}(\tau^2+2|\mu|+d)^N\Big|&=\Big|\sum_{m_1+2m_2=N_1}C_{N_1, m_1,m_2}  (\tau^2+2|\mu|+d)^{N-m_1-m_2} (2\tau)^{m_1}\Big| \\
  &\lesssim  (\tau^2+2|\mu|+d)^{N-\frac{N_1}{2}}.
\end{align*}
By the \FDB again for $e^{-t (\tau^2+2|\mu|+d)}$, we have
\begin{align*}
  \Big|\frac{\pa^{N_2}}{\pa\tau^{N_2}}e^{-t(\tau^2+2|\mu|+d)}\Big|&=\Big|\sum_{n_1+2n_2=N_2}C_{N_2, n_1,n_2} e ^{-t(\tau^2+2|\mu|+d)} ( -2t\tau)^{n_1}\Big| \\
  &\lesssim  t^{N_2}e ^{-t(\tau^2+2|\mu|+d)}  (\tau^2+2|\mu|+d)^{\frac{N_2}{2}}.
\end{align*}
By the Leibniz rule and the assumption that
%\begin{align*}
$$
     \Big|\frac{\pa^{N_3}}{\pa\tau^{N_3}}  m(\tau, |\mu|)\Big| \lesssim  (\tau^2+2|\mu|+d)^{-N_3/2}, 
     $$
%\end{align*}
 we obtain
\begin{align*}
    &\quad \Big|\frac{\pa^N}{\pa\tau^N} \mathbf f(\tau,|\mu|)\Big|\\
&=\bigg|\sum_{N_1+N_2+N_3=N}C_{N, N_1, N_2, N_3} \frac{\pa^{N_1}}{\pa\tau^{N_1}}(\tau^2+2|\mu|+d)^N\frac{\pa^{N_2}}{\pa\tau^{N_2}}e^{-t(\tau^2+2|\mu|+d)}\frac{\pa^{N_3}}{\pa\tau^{N_3}}  m(\tau, |\mu|)\bigg|\\
&\lesssim \sum_{N_1+N_2+N_3=N }t^{N_2}(\tau^2+2|\mu|+d)^{N-\frac{N_1}{2}+\frac{N_1}{2}-\frac{N_3}{2}}e ^{-t(\tau^2+2|\mu|+d)}.
\end{align*}
Again by using that $\lambda^Ne^{-\lambda}\lesssim_N1$ for any $\lambda>0$, we can obtain \eqref{Key estimate 1}.

Inserting \eqref{Key estimate 1} into \eqref{equ:i1estN}, we have
\begin{align}\label{equ:INest}
\mathrm{I}_N&\lesssim  t^{-N}\int_{\R^{d+1}}\Big|\sum_{\mu\in\N^d}e^{-\frac{t}2(\tau^2+2|\mu|+d)}\Phi_{\mu}(x')\Phi_{\mu}(x)\Big|^2\dd \tau\dd x'
\lesssim t^{-\frac{d+1}2-N}.
\end{align}

Next, we estimate $\mathrm{II}_N$.
It suffices to show
\begin{equation}\label{equ:iiNgoal}
  \mathrm{II}_N=\int_{\R^{d+1}} \big|(x-x')^\beta\pa_t^{N}M_t(z,z')\big|^2\dd z'\lesssim  t^{-\frac{d+1}2-N},\quad \text{for all }  |\beta|=N.
\end{equation}
We rewrite $M_t(z,z')$ as
\begin{align*}
&\pa_t^N M_t(z,z')
%&=(-1)^N\sum_{k=0}^{\infty}\int_{\R}e^{i\tau(\rho-\rho')}(\tau^2+2k+d)^N
%e^{-t(\tau^2+2k+d)}m(\tau,k)\Phi_k(x,x')\dd \tau\\
=(-1)^N\sum_{k=0}^{\infty}\Psi_k^N(\rho-\rho') \Phi_k(x,x'),
\end{align*}
where
$\Psi_k^N(\rho-\rho')=\int_{\R}e^{i\tau(\rho-\rho')} \mathbf f(\tau,k)\dd \tau.$
Recall that $A_j=-\frac{\pa}{\pa x_j}+x_j$. Define also $A'_j=-\frac{\pa}{\pa x_j'}+x_j'$.
From  Lemma $3.2.3$ in \cite{Than93book}, we have 
\begin{align*}
   &(x-x')^\beta \pa_t^N M_t(z,z')=\sum_{k=0}^{\infty}\sum_{\gamma,\delta}C_{\gamma,\delta}\triangle_k^{|\delta|}\Psi_k^N(\rho-\rho')(A'-A)^\gamma  \Phi_k(x,x'),
\end{align*}
where $(A'-A)^\gamma\coloneqq \prod\limits_{j=1}^d(A'_j-A_j)^{\gamma_j}$, and $\sum_{\gamma,\delta}$ denotes the sum   over all multi-indices $\gamma$ and $\delta$ satisfying
$2\delta_j-\gamma_j=\beta_j$ and $\delta_j\leq \beta_j.$
Hence,
\begin{align*}
 \MoveEqLeft (x-x')^\beta \pa_t^N M_t(z,z')\\
%&=\sum_{k=0}^{\infty}\sum_{\gamma,\delta}C_{\gamma,\delta}\int_{\R}e^{i\tau(\rho-\rho')}\big[\triangle_k^{|\delta|} \mathbf f(\tau,k)\big]\dd \tau(A'-A)^\gamma  \Phi_k(x,x')\\
&=\int_{\R}e^{-i\tau\rho'}\Big\{e^{i\tau\rho}\sum_{k=0}^{\infty}\sum_{\gamma,\delta}C_{\gamma,\delta}\big[\triangle_k^{|\delta|} \mathbf f(\tau,k)\big](A'-A)^\gamma  \Phi_k(x,x')\Big\}\dd \tau.
%&=\mathcal{F}_{\tau\mapsto\rho'}\Big\{e^{i\tau\rho}\sum_{k=0}^{\infty}\sum_{\gamma,\delta}C_{\gamma,\delta}\triangle_k^{|\delta|}\big[(\tau^2+2k+d)^N
%e^{-t(\tau^2+2k+d)}m(\tau,k)\big](A'-A)^\gamma  \Phi_k(x,x')\Big\}(\rho').
\end{align*}
Using the Plancherel theorem in $\rho'$, we get
\begin{align}\label{equ:ii2-123}
   &\int_{\R^{d+1}} \big|(x-x')^\beta \pa_t^NM_t(z,z')\big|^2\dd z'\\\nonumber
   &=C \int_{\R^{d+1}}\bigg|\sum_{k=0}^{\infty}\sum_{\gamma,\delta}C_{\gamma,\delta}\big[\triangle_k^{|\delta|} \mathbf f(\tau,k)\big](A'-A)^\gamma  \Phi_k(x,x')\bigg|^2\dd \tau\dd x'.
\end{align}
On one hand, we claim that the second  assumption in \eqref{equ:assummulf} implies that
\begin{align}\label{Key estimate 2}
 \big|\triangle_k^{|\delta|} \mathbf f (\tau,k)\big|\lesssim t^{-(N-|\delta|)} e^{-t(\tau^2+2k+d)}.
\end{align}
Indeed, this follows from  the following Leibniz rule for finite differences,
\begin{align*}
    \MoveEqLeft \triangle_k^N(f(k) g(k) h(k))\\
    &=\sum_{m_1+m_2+m_3=N}C_{ m_1, m_2,m_3}\triangle_k^{m_1}f(k) \triangle_k^{m_2}g(k+m_1) \triangle_k^{m_3} h(k+m_1+m_2),
\end{align*}
 the second assumption in \eqref{equ:assummulf}, and the  bounds
\begin{align*}
    \big|\triangle_ k^{N_1} (\tau^2+2k+d)^N  \big| &\lesssim (\tau^2+2k+N_1+d)^{N-N_1},\\
    \big|\triangle_ k^{N_2} e^{-t(\tau^2+2k+d)}\big| &\lesssim t^{N_2}\ e^{-t(\tau^2+2k+d)}.
\end{align*}
 
On the other hand, by using  \eqref{equ:ajphiest} to expand $(A'-A)^\gamma \Phi_k(x,x')$, we obtain  
\begin{align} \label{expand A}
(A'-A)^\gamma  \Phi_k(x,x')&=\sum_{|\mu|=k}\sum_{\tau+\sigma=\gamma}A^\tau\Phi_\mu(x)(A')^\sigma\Phi_\mu(x')\nonumber\\
&=\big(2(k+1)\big)^\frac{|\gamma|}2\sum_{|\mu|=k}\sum_{\tau+\sigma=\gamma}\Phi_{\mu+\tau}(x)\Phi_{\mu+\sigma}(x').
\end{align}

Inserting \eqref{Key estimate 2} and \eqref{expand A}  into \eqref{equ:ii2-123}, we obtain \eqref{equ:iiNgoal} as follows:
\begin{align*}
   &\int_{\R^{d+1}} \big|(x-x')^\beta \pa_t^NM_t(z,z')\big|^2\dd z'\\\nonumber
   &\lesssim  \int_{\R^{d+1}}\Big|\sum_{k\ge0,\gamma,\delta }t^{-N+|\delta|} e^{-t(\tau^2+2k+d)}\big(2(k+1)\big)^\frac{|\gamma|}2\sum_{\substack{|\mu|=k \\ \tau+\sigma=\gamma }}\Phi_{\mu+\tau}(x)\Phi_{\mu+\sigma}(x')
   \Big|^2\dd \tau\dd x'\\
   &\lesssim t^{-N} \int_{\R^{d+1}}\Big|\sum_{\mu\in\N^d}e^{-\frac{t}2(\tau^2+2|\mu|+d)}\sum_{\gamma,\delta}
   \sum_{\tau+\sigma=\gamma}\Phi_{\mu+\tau}(x)\Phi_{\mu+\sigma}(x')
   \Big|^2\dd \tau\dd x'\\
   &\lesssim t^{-\frac{d+1}2-N}.
\end{align*}

Finally, by combining \eqref{equ:intesn0}, \eqref{equ:nger1inte}, \eqref{equ:INest} and \eqref{equ:iiNgoal}, we obtain \eqref{equ:intestmtes}, which concludes the proof of Lemma \ref{lem:kernmest}.\end{proof}

At last, we are ready to show the boundedness of the operator $g^*_N$ by the Hardy-Littlewood maximal function estimate and the boundedness of $g_N$.

%The third inequality of \eqref{equ:redugoal} holds true since $g_N^\ast$ are controlled by the Hardy-Littlewood maximal function when $p>2$ and $N>\frac{d+1}{2}$.
\begin{lemma}\label{third inequality} Let $2<p<\infty$ and $N>\frac{d+1}{2}$. Then we have
\begin{align*}
    \|g_N^\ast (f)\|_{L^p(\mathbb R^{d+1})} \leq C\|f\|_{L^p(\mathbb R^{d+1})}.
\end{align*}

\end{lemma}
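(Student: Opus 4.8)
\textbf{Proof plan for Lemma \ref{third inequality}.}
The plan is to run Stein's classical argument for the $g^\ast$-function, reducing the whole estimate to the $L^p$-boundedness of $g_1$ (a special case of Lemma \ref{prop:equivnormgn}) together with the Hardy--Littlewood maximal inequality on $\mathbb R^{d+1}$. First I would record the elementary scaling identity: since $N>\frac{d+1}{2}$, the function $w\mapsto(1+|w|^2)^{-N}$ is integrable on $\mathbb R^{d+1}$, so for every $t>0$,
\[
 \int_{\mathbb R^{d+1}}\bigl(1+t^{-1}|z'-z|^2\bigr)^{-N}\,\dd z = c_N\, t^{\frac{d+1}{2}},
\]
and because the kernel $k_t(w):=(1+t^{-1}|w|^2)^{-N}$ is nonnegative, radial and decreasing in $|w|$, the standard majorization of such convolutions by the maximal operator gives, for any nonnegative $\psi$,
\[
 \int_{\mathbb R^{d+1}}\bigl(1+t^{-1}|z'-z|^2\bigr)^{-N}\psi(z)\,\dd z \;\lesssim\; t^{\frac{d+1}{2}}\, M\psi(z'),
\]
where $M$ denotes the Hardy--Littlewood maximal operator.

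Next I would argue by duality at the level of $L^{p/2}$, which is legitimate since $p>2$ forces $p/2>1$. Writing $\|g_N^\ast(f)\|_{L^p}^2=\|g_N^\ast(f)^2\|_{L^{p/2}}=\sup\int g_N^\ast(f)^2\psi$, the supremum taken over nonnegative $\psi$ with $\|\psi\|_{L^{(p/2)'}}\le1$, I insert the definition \eqref{equ:litpaastgndef}, use Tonelli to integrate in $z$ first, and apply the bound above. The powers of $t$ then combine as $t^{1-\frac{d+1}{2}}\cdot t^{\frac{d+1}{2}}=t$, so that
\[
 \int_{\mathbb R^{d+1}} g_N^\ast(f)(z)^2\psi(z)\,\dd z \;\lesssim\; \int_0^\infty\!\!\int_{\mathbb R^{d+1}} t\,\bigl|\pa_t e^{-t\AH}f(z')\bigr|^2\, M\psi(z')\,\dd z'\,\dd t = \int_{\mathbb R^{d+1}} g_1(f)(z')^2\, M\psi(z')\,\dd z'.
\]
Applying Hölder with exponents $p/2$ and $(p/2)'$, then the Hardy--Littlewood maximal theorem on $L^{(p/2)'}$ (which is where the hypothesis $p>2$, hence $(p/2)'<\infty$, is used) and Lemma \ref{prop:equivnormgn} with $N=1$, I obtain
\[
 \int_{\mathbb R^{d+1}} g_N^\ast(f)^2\psi \;\lesssim\; \|g_1(f)\|_{L^p(\mathbb R^{d+1})}^2\,\|M\psi\|_{L^{(p/2)'}(\mathbb R^{d+1})} \;\lesssim\; \|f\|_{L^p(\mathbb R^{d+1})}^2,
\]
and taking the supremum over $\psi$ completes the proof.

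The only genuinely delicate point is the interplay between the two numerical restrictions: one needs $N>\frac{d+1}{2}$ precisely so that $k_t$ is integrable with mass $c_N t^{(d+1)/2}$ and can be dominated by the maximal function, and one needs $p>2$ precisely so that the dual exponent $(p/2)'$ is finite and the Hardy--Littlewood theorem applies there. No further information about the operator $\AH$ or its heat kernel is required beyond what has already been established, since all of that has been absorbed into the $L^p$-boundedness of $g_1$.
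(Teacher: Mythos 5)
Your proposal is correct and follows the paper's argument essentially line by line: domination of the $(1+t^{-1}|z'-z|^2)^{-N}$ kernel by the Hardy--Littlewood maximal operator (using $N>\frac{d+1}{2}$ for integrability), duality at $L^{p/2}$ (using $p>2$ so that $(p/2)'$ is finite), Tonelli, H\"older, the maximal theorem on $L^{(p/2)'}$, and finally the $L^p$-boundedness of $g_1$ from Lemma~\ref{prop:equivnormgn}. There is no substantive difference from the paper's proof.
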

\begin{proof}
Let $q$ be the H\"older conjugate exponent of $p/2$. It is easy to see  that 
\begin{align*}
  \MoveEqLeft t^{-\frac{d+1}{2}}\int_{\R^{d+1}}(1+t^{-1}|z'-z|^2)^{-N}|h(z)|\dd z
%  &\lesssim  t^{-\frac{d+1}{2}}\int_{B(z',\sqrt{t})}|h(z)|\dd z
%  +t^{-\frac{d+1}{2}}\sum_{k\geq0} 2^{-2kN} \int_{|z'-z|\simeq 2^k\sqrt{t}}|h(z)|\dd z\\
%  &\lesssim  Mh(z')+\sum_{k\geq0} 2^{-k(2N-{(d+1)})} Mh(z')\\
  \lesssim  Mh(z'), \quad \text{for }\;  N>\frac{d+1}{2}, 
  \end{align*}
  where $M$ is  the Hardy--Littlewood maximal operator. Therefore, we have
  \begin{align*}
  &\quad \big\|g_N^\ast (f)\big\|_{L^p(\R^{d+1})}^2
%  =
%  \big\|g_N^\ast (f)^2\big\|_{L^{p/2}(\R^{d+1})}
%  \\
%  &
  =\sup_{\substack{\|h\|_{L^q}=1}}\Big|\int_{\R^{d+1}}
  (g_N^\ast f)^2 h(z)\dd z\Big|
  \\
%  &=\sup_{\|h\|_{L^q}=1}\Big|\int_{\R^{d+1}}
% \int_0^\infty \int_{\R^{d+1}}
%  t^{1-\frac{d+1}{2}}(1+t^{-1}|z'-z|^2)^{-N}
%  \\
%  &\qquad\quad\times
%  |\pa_t e^{-t\AH}f(z')|^2\dd z'\dd t h(z)\dd z\Big|
%  \\
  &\lesssim \sup_{\substack{\|h\|_{L^q}=1}}\int_{\R^{d+1}}\!
 \int_0^\infty\!\! t|\pa_t e^{-t\AH}f(z')|^2\dd t\!\int_{\R^{d+1}}\!\!\!t^{-\frac{d+1}{2}}(1+t^{-1}|z'-z|^2)^{-N}|h(z)|\dd z \dd z'\\
 &\lesssim \sup_{\substack{\|h\|_{L^q}=1}}\int_{\R^{d+1}}
 |(g_1f)(z')|^2 M\big(\big|h(z')\big|\big)\dd z'\\
 &\lesssim \sup_{\substack{\|h\|_{L^q}=1}}\|g_1f\|^2_{L^p}\|M(|h|)\|_{L^q}\lesssim \|f\|_{L^{p}(\R^{d+1})}^2,
  \end{align*}
where we have used Lemma \ref{prop:equivnormgn} in the last inequality. This complets the proof.
\end{proof}

\subsection{Proof of Theorem \ref{thm:MikhlinmulHP}}
We are now ready to show
\begin{align*}
  \|T_m f\|_{L^p(\mathbb R^{d+1})} \lesssim \|f\|_{L^p(\mathbb R^{d+1})}, \ \text{for}\ 1<p<\infty,
\end{align*}\
under the assumption \eqref{equ:assummulf}.

 For the case $p=2$, the Plancherel theorem in $\rho$ gives
 \begin{align*}
     \|T_m f(\cdot, x)\|_{L^2_\rho}=\big\|\sum_{k=0}^\infty m(\tau, k) P_k(\mathcal F_\rho f)(\cdot, x)\big\|_{L^2_\tau}.
 \end{align*}
By the orthogonality of Hermite functions, we have
\begin{align*}
     \|T_m f(\cdot, x)\|_{L^2_\rho L^2_x}^2&=\sum_{k=0}^\infty \big\|m(\tau, k) P_k(\mathcal F_\rho f)(\cdot, x)\big\|_{L^2_\tau L^2_x}^2 \\
     &\leq \|m(\tau, k)\|_{L^\infty} \sum_{k=0}^\infty \big\|P_k(\mathcal F_\rho f)(\cdot, x)\big\|_{L^2_\tau L^2_x}^2\\
     &\lesssim \|f\|_{L^2(\mathbb R^{d+1})}^2.
\end{align*}
Hence, the result in Theorem \ref{thm:MikhlinmulHP} holds for  $p=2$.

For the case $2<p<\infty$. Let $N_0=\lfloor\frac{d+1}{2}\rfloor+1$,  by   Lemmas \ref{prop:equivnormgn},  \ref{prop:pointcon} and  \ref{third inequality}, we have
\begin{equation*}
  \|T_mf\|_{L^p}\leq C\|g_{N_0+1}(T_mf)\|_{L^p}\leq C\|g_{N_0}^\ast(f)\|_{L^p}\leq C\|f\|_{L^p}.
\end{equation*}

 Finally, the boundedness of the operator $T_m$ in $L^p(\mathbb R^{d+1})$ with $1<p<2$ follows from the duality argument. \hfill \qedsymbol

%We divide the proof of \eqref{equ:redugoal} into the following lemmas.

% By duality, the definition of $g_N^\ast$-function \eqref{equ:litpaastgndef}, %\begin{lemma}
%[$L^p$-boundedness of $g_N^\ast$]\label{prop:gncontrbymax}
%If $N>\frac{d+1}{2}$ and $p>2$, then, there holds
%\begin{equation*}
%  \big\|g_N^\ast f\big\|_{L^p(\R^{d+1})}\leq C\|f\|_{L^p(\R^{d+1})}.
%\end{equation*}
%\end{lemma}

\newcommand{\doi}[1]{DOI: \href{https://doi.org/#1}{\texttt{#1}}}
\newcommand{\zbl}[1]{Zbl: \href{https://zbmath.org/?q=an\%3A#1}{\texttt{#1}}}
\newcommand{\arxiv}[1]{arXiv: \href{https://arxiv.org/abs/#1}{\texttt{#1}}}

\begin{center}

\end{center}

\end{document}